\newtheorem{corollary}{Corollary}[section]
\newtheorem{lemma}{Lemma}[section]
\newtheorem{theorem}{Theorem}[section]
\newtheorem{proposition}{Proposition}[section]
\newtheorem{example}{Example}[section]
\newtheorem{construction}{Construction}[section]
\newtheorem{fact}{Fact}[section]
\newtheorem{remark}{Remark}[section]
\newcommand{\C}{\mathcal {C}}
\renewcommand{\P}{\mathcal {P}}
\renewcommand{\S}{\mathcal {S}}
\newcommand{\Z}{\mathcal {Z}}
\newcommand{\M}{\mathcal {M}}
\renewcommand{\H}{\mathcal {H}}
\newcommand{\bbZ}{{\mathbb Z}}
\begin{document}

\title{Optimal Ternary Codes with Weight $w$ and Distance $2w-2$ in $\ell_1$-Metric}

\author{Xin~Wei,~Tingting~Chen,~and~Xiande~Zhang
        \thanks{X. Wei ({\tt weixinma@mail.ustc.edu.cn}) and T. Chen ({\tt ttchenxu@mail.ustc.edu.cn}) are with School of Mathematical Sciences and
        School of Cyber Security, University of Science and Technology of China, Hefei, 230026, Anhui, China.}

\thanks{X. Zhang ({\tt drzhangx@ustc.edu.cn}) is with School of Mathematical Sciences,
University of Science and Technology of China, Hefei, 230026, Anhui, China.  The research of X. Zhang is supported by NSFC under grant 11771419, and by ``the  Fundamental
Research Funds for the Central Universities''.}

}
\maketitle

\begin{abstract} The study of constant-weight codes in $\ell_1$-metric was motivated by the duplication-correcting problem for data storage in live DNA. It is interesting to determine the maximum size of a code given the length $n$, weight $w$, minimum distance $d$ and the alphabet size $q$. In this paper,  based on graph decompositions, we determine the maximum size of ternary codes with constant weight $w$ and distance $2w-2$ for all sufficiently large length $n$. Previously, this was known only for a very sparse family $n$ of density $4/w(w-1)$.

\end{abstract}

\begin{IEEEkeywords}
\boldmath DNA storage,  constant-weight code, $\ell_1$-metric,  packing.
\end{IEEEkeywords}

\section{Introduction}
\IEEEPARstart{C}{odes} in $\ell_1$-metric distance have attracted a lot attention in the last decade for their useful applications in rank-modulation scheme for flash memory \cite{barg2010codes,jiang2009error,tallini20111,zhou2014systematic,farnoud2013error,kabatianskycodes}. In the recent error correcting problem of data storage in live DNA,
 $\ell_1$-metric was also revealed to play an important role in attacking the tandem duplication errors  \cite{jain2017duplication}. To store information in live DNA, the reliability of data is crucial. A tandem duplication error, which means an error happens by inserting a copy of a segment of the DNA adjacent to its original position during replication, is one of the errors we must fight against during DNA coding.  According to \cite{lander2001initial}, tandem duplications constitute about 3\% of our human genome and may cause serious phenomena and severe information loss. As a consequence, duplication correcting codes have been studied by many recent works, for example \cite{jain2017duplication,kovavcevic2018asymptotically,lenz2018bounds,tang2019single,yehezkeally2019reconstruction}. Specially, the work in \cite{jain2017duplication} connects duplication correcting codes with constant-weight codes (CWCs) in $\ell_1$-metric over integers, and shows that the existence of optimal CWCs with certain weights and lengths will result in optimal tandem duplication correcting codes \cite[Theorem 20]{jain2017duplication}.

Motivated by this connection, it is interesting to study CWCs in $\ell_1$-metric. The central problem regarding CWCs is to determine the exact value of maximum cardinality of a code with given length, weight, minimum distance, and the alphabet size. CWCs with Hamming distance have been well studied \cite{graham1980lower,brouwer1990new,agrell2000upper,zhang2012optimal,chee2014complexity,chee2015hanani,
chee2017linear,chee2017constructions,chee2019decompositions,Cao2007ConstructionsFG,Zhang2010Optimal,Bogdanova2000New} due to its wide applications in coding for bandwidth-efficient channels \cite{costello2007channel}, the design of oligonucleotide sequences for DNA computing \cite{king2003bounds,milenkovic2005design}, and so on.
If the code is binary, the  Hamming distance is indeed the $\ell_1$-metric. However, when it is $q$-ary with $q\geq 3$, the metrics are different.  The study of non-binary CWCs with $\ell_1$-metric is very rare. Based on our knowledge, only works in  \cite{vincent2018code,jinushi1990construction,chen2020optimal} are involved.


In this paper, we consider ternary CWCs with $\ell_1$-metric. A ternary code of constant $\ell_1$-weight $w$, minimum $\ell_1$-distance $d$ and  length $n$ is denoted by an $(n, d, w)_3$ code. The maximum size among all $(n, d, w)_3$ codes is denoted by $A_3(n, d, w)$, and a code achieving this size is called optimal. Optimal $(n, d, w)_3$ codes were constructed in \cite{chen2020optimal} when $w=3,4$  for all possible distances $d$ and length $n$ with only finite many cases undetermined. For general weight $w$ and distance $2w-2$, the authors in \cite{chen2020optimal} provided an upper bound  $A_3(n, 2w-2, w)\le\left\lfloor\frac{n(n-1-(w-1)(w-2))}{w(w-1)}\right\rfloor+n$, and showed that this bound can be achieved for $n$ is large enough and $n\equiv w, -2w+3, 1$ or $-w+2 \mod (w-1)w$. This is only a fraction of the numbers $n$ when $w$ is big.

Our main contribution in this paper is showing that, given the weight $w$, the upper bound of $A_3(n, 2w-w, w)$ provided in \cite{chen2020optimal} can be achieved for all sufficiently large $n$, which greatly improves the result in \cite{chen2020optimal}. We state our main result as follows.

\vspace{0.2cm}
\noindent\textbf{Main Result:}
For any integer $w\ge 5$, $A_3(n, 2w-2, w)=\left\lfloor\frac{n(n-1-(w-1)(w-2))}{w(w-1)}\right\rfloor+n$ for all sufficiently large $n$.
\vspace{0.2cm}

Further, we show that there exists an optimal $(n, 2w-2, w)_3$ code that is  balanced unless when
 $w$ is odd and $n\equiv1\mod (w-1)$ with certain restrictions. Here, a code is  balanced  means that the collection of supports of all codewords, viewed as cliques, forms a decomposition of the complete graph $K_n$. Our methods rely on a famous result of Alon et al. \cite{alon1998packing} on graph decompositions, which reduces the whole problem to looking for a sparse code with desired properties.

 This paper is organized as follows. In Section \ref{pre}, we first introduce some necessary definitions and notations, then connect  codes with graph packings to reduce the problem of finding an optimal code $\C$ to finding a proper sub-code $\S\subset \C$. Section \ref{mainconsturctionfor01} gives the general idea of how to find the sub-code $\S$, and illustrates the idea by taking  $n\equiv 0,1 \mod (w-1)$ with certain restrictions. A complete solution to the cases of  $n\equiv 0,1 \mod (w-1)$  is given in Section \ref{additionalconstructionA}, where a nonexistence result of an optimal balanced code with  $n\equiv1\mod (w-1)$ under additional conditions  is deduced. In Section \ref{Constructionforgeneralt}, we deal with all other cases, $n\equiv t \mod (w-1)$ with $2\leq t\leq w-2$ by an algorithmic construction of the required sub-code $\S$. Finally we conclude our results in Section \ref{conc.}.

\section{Preliminaries}\label{pre}

For integers $m\le m',$ we denote $[m, m']\triangleq\{m, m+1, \dots, m'\}$ and $[m]\triangleq [1, m]$ for short.
Let $q\ge 2$ be an integer.  A $q$-ary code $\mathcal C$ of length $n$ is a set of vectors in $I_q^n$, in which $I_q:=\{0, 1, \ldots, q-1\}$. An element in $\mathcal C$ is called a {\it codeword}. For two codewords $\bm u=(u_i)_{1\le i\le n}$ and  $\bm v=(v_i)_{1\le i\le n}$, the {\it $\ell_1$-distance} between $\bm u$ and $\bm v$ is $d(\bm u, \bm v)=\sum_{i=1}^n |u_i-v_i|$. Here the sum and subtraction are on $\bbZ$. The {\it $\ell_1$-weight} of $\bm v$ is the $\ell_1$-distance between $\bm v$ and vector $\bm 0$. If a code $\mathcal C$ satisfies that the $\ell_1$-weight of any codeword is $w$ for some integer $w$, then we call $\mathcal C$ a {\it constant-weight} code. If further the minimum $\ell_1$-distance between any two distinct codewords in $\mathcal C$ is at least $d$, we say that $\mathcal C$ is an $(n, d, w)_q$ code.

In this paper, we focus on ternary codes, that is $q=3.$  The maximum size among all $(n, d, w)_3$ codes is denoted by $A_3(n, d, w).$ We call an $(n, d, w)_3$ code $\mathcal C$ {\it optimal} if the size of $\mathcal C$ reaches $A_3(n, d, w).$ The following lemma from \cite{chen2020optimal} gives us an upper bound for $A_3(n, 2w-2, w).$

\begin{lemma}\label{upperbound}\cite{chen2020optimal}
$A_3(n, 2w-2, w)\le\left\lfloor\frac{n(n-1-(w-1)(w-2))}{w(w-1)}\right\rfloor+n.$
\end{lemma}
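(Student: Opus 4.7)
The plan is to extract two disjointness statements from the distance constraint and then solve a small linear program. For any two distinct codewords $\bm u,\bm v$ of $\ell_1$-weight $w$, summing the identity $|u_i-v_i|=u_i+v_i-2\min(u_i,v_i)$ over coordinates yields $d(\bm u,\bm v)=2w-2\sum_{i=1}^{n}\min(u_i,v_i)$, so the hypothesis $d(\bm u,\bm v)\ge 2w-2$ collapses to the single inequality
\[
\sum_{i=1}^{n}\min(u_i,v_i)\le 1.
\]
This is the only property of the code I would use. For $\bm c\in\C$ write $a_{\bm c}$ for the number of coordinates of $\bm c$ equal to $2$; since $\bm c$ has $\ell_1$-weight $w$, its support has size $w-a_{\bm c}$.

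From the $\min$-sum inequality I would draw two elementary conclusions. First, no single coordinate can carry the value $2$ in two different codewords, because that coordinate alone would contribute $2$ to the $\min$-sum; hence
\[
\sum_{\bm c\in\C}a_{\bm c}\le n.
\]
Second, no pair of distinct coordinates can both lie in the supports of two different codewords, because each of the two coordinates would contribute at least $1$ to the $\min$-sum. Identifying each codeword with the clique on its support, these cliques are pairwise edge-disjoint in $K_n$, so
\[
\sum_{\bm c\in\C}\binom{w-a_{\bm c}}{2}\le\binom{n}{2}.
\]

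Maximising $|\C|$ against these two inequalities, with $a_{\bm c}\in\{0,1,\ldots,\lfloor w/2\rfloor\}$, is a small linear program. Writing $N_j$ for the number of codewords with $a_{\bm c}=j$, the ``edge savings per unit of $2$-budget'' is $[\binom{w}{2}-\binom{w-j}{2}]/j=(2w-j-1)/2$, which is strictly maximised at $j=1$, so the extremum uses only $j\in\{0,1\}$: take $N_1=n$ of type $1$ to saturate the $2$-budget and then $N_0\le[\binom{n}{2}-n\binom{w-1}{2}]/\binom{w}{2}$ of type $0$ to exhaust the edge budget. Rearranging gives
\[
|\C|\le n+\frac{n\bigl(n-1-(w-1)(w-2)\bigr)}{w(w-1)},
\]
and the floor comes from the integrality of $N_0$. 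The one point that needs care is confirming the LP extremum really does use only $j\in\{0,1\}$; I would verify this with the dual certificate $(\alpha,\beta)=(2/w,\,2/(w(w-1)))$, checking the feasibility condition $\alpha j+\beta\binom{w-j}{2}\ge 1$ for all integer $j\ge 0$ by the one-line inequality $j(j-1)\ge 0$, with equality exactly at $j=0,1$.
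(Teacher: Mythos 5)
Your proof is correct, and it is worth noting that the paper itself does not prove this lemma at all: the bound is simply quoted from \cite{chen2020optimal}, so your argument is a genuinely self-contained derivation rather than a paraphrase of anything in this text. The two consequences you extract from $d(\bm u,\bm v)\ge 2w-2$ via the identity $d(\bm u,\bm v)=2w-2\sum_i\min(u_i,v_i)$ are exactly conditions (a) and (b) of Fact~\ref{Transfer} (equivalently $(a')$, $(b')$ of Corollary~\ref{transfer}): supports of distinct codewords meet in at most one position, and each position is labeled $2$ at most once. The cited source in effect also double-counts edges and $2$-labels, but your packaging of the count as a two-constraint linear program with the explicit dual certificate $(\alpha,\beta)=\bigl(2/w,\,2/(w(w-1))\bigr)$, whose feasibility reduces to $j(j-1)\ge 0$, is complete and has the added benefit of making transparent why optimal codes are dominated by types $1^w$ and $1^{w-2}2^1$, in line with the constructions in this paper. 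One small wording fix: the final floor is justified by the integrality of $|\C|-n$ (the dual bound gives $|\C|\le n+\frac{n(n-1-(w-1)(w-2))}{w(w-1)}$ and $|\C|-n\in\bbZ$), not by the integrality of the LP variable $N_0$, since your certificate bounds only the aggregate $\sum_j N_j$ and an actual optimal code need not have the LP-extremal type distribution.
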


When weight $w$ is fixed, let $B(n)\triangleq\left\lfloor\frac{n(n-1-(w-1)(w-2))}{w(w-1)}\right\rfloor$ for convenience. Our main work is to show that the upper bound $B(n)+n$ can be achieved when $n$ is sufficiently large for any fixed $w$.

For any codeword $\bm u\in\mathcal C$, the {\it support} of $\bm u$ is defined as $supp(\bm u)=\{x\in[n], u_{x}\ne 0\}.$ There is a canonical one-to-one correspondence between a vector $\bm u$ in $I_3^n$ to a subset of $[n]\times[2]$. We can express the vector $\bm u=(u_1, u_2, \ldots, u_n)$ as a subset  $\{(x,{u_x})\in [n]\times[2]\mid {x\in supp(\bm u)}\}$, which we call the {\it labeled support} of $\bm u$. Specially for any $j\in[n]$, if $u_j=i\in I_3$, we say the position $j$ is {\it labeled} by the symbol $i$ in the codeword $\bm u$. For short, we usually write a member $(a, b)\in [n]\times[2]$ in the form $a_b$.

\begin{example}
When $n=5$,  codewords $12000,$ $10200,$ $11200$ and $20200$ can be described as $\{1_1, 2_2\},$ $\{1_1, 3_2\},$ $\{1_1, 2_1, 3_2\}$ and $\{1_2, 3_2\}$, respectively. Furthermore in $11200$, the position 3 is labeled by $2$.
\end{example}

For ternary codes of constant weight $w$, the \emph{type} of a codeword is denoted by $1^a2^b$ for some nonnegative integers $a$ and $b$ satisfying $a+2b=w$. Here, a codeword is of type $1^a2^b$ if there are in total $a$ different positions labeled by $1$ and $b$ different positions labeled by $2$ in it. We usually write $1^w$ instead of $1^w2^0$ for convenience. By a simple observation, there exists a necessary and sufficient condition of an $(n, 2w-2, w)_3$ code.

\begin{fact}\label{Transfer}
A ternary CWC code $\mathcal C$ with weight $w$ is an $(n, 2w-2, w)_3$ code if and only if the following conditions are satisfied:
\begin{itemize}
\item[(a)] For any two different codewords $\bm u$ and $\bm v$ in $\mathcal C$, $|supp(\bm u)\cap supp(\bm v)|\le 1$.
\item[(b)] If there exists a codeword $\bm u\in \mathcal C$ and a position $i\in[n]$ such that $u_i=2$, then for any other codeword $\bm v\in\mathcal C$, $v_i\ne 2$.
\end{itemize}
\end{fact}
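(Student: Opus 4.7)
My plan is to reduce the statement to a single closed-form identity for the $\ell_1$-distance between two weight-$w$ codewords, from which both directions of the equivalence fall out by inspection. Write $S = supp(\bm u) \cap supp(\bm v)$. Splitting the defining sum $\sum_i |u_i - v_i|$ over the three pieces $supp(\bm u)\setminus S$, $supp(\bm v)\setminus S$, and $S$ (positions outside $supp(\bm u)\cup supp(\bm v)$ contribute $0$), and using the elementary identity $a+b-|a-b| = 2\min(a,b)$ together with $\sum_{i\in supp(\bm u)} u_i = \sum_{i\in supp(\bm v)} v_i = w$, I expect the identity
\[
d(\bm u, \bm v) \;=\; 2w \;-\; 2\sum_{i\in S}\min(u_i, v_i).
\]

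With this identity, the equivalence is immediate. Since $u_i, v_i \in \{1,2\}$ for every $i\in S$, each term $\min(u_i, v_i)$ is at least $1$, so $\sum_{i\in S}\min(u_i, v_i) \ge |S|$. Hence $d(\bm u, \bm v) \ge 2w-2$ forces $|S|\le 1$, which is condition (a); and when $|S|=1$, the same inequality forces $\min(u_i, v_i) = 1$ at the unique common position, which precisely rules out $u_i=v_i=2$ and yields condition (b). Conversely, (a) and (b) together guarantee $\sum_{i\in S}\min(u_i,v_i) \le 1$ for every distinct pair (the sum is $0$ when $S=\emptyset$, and is $1$ when $|S|=1$ with at least one of the two common values equal to $1$), so $d(\bm u, \bm v) \ge 2w-2$ by the identity.

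I do not anticipate a real obstacle here; the proof is essentially bookkeeping. The only point that needs a little care is recognizing that (b) is exactly the right boundary restriction for the $|S|=1$ case: without it, a single common position carrying the symbol $2$ in both codewords would drop the distance to $2w-4$, even though the support-intersection bound (a) is satisfied. Once the displayed identity is in hand, both directions are one-line arguments from the bound $\min(u_i,v_i)\ge 1$ on $S$.
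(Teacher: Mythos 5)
Your identity $d(\bm u,\bm v)=2w-2\sum_{i\in S}\min(u_i,v_i)$ is correct, and the two directions you draw from it match what the paper treats as a simple observation (it only remarks that (a) and (b) trivially give the distance bound and that violating either drops the distance below $2w-2$). So your proposal is correct and follows essentially the same route, just written out more explicitly.
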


On the one hand, conditions (a) and (b) can lead to an $(n, 2w-2, w)_3$ code trivially. On the other hand, if any one of them are violated, there must be two different codewords in $\mathcal C$ such that the $\ell_1$-distance between them is less than $2w-2.$ We will use the language in graph packings to rewrite this fact in the next subsection.

\subsection{From Codes to Graph Packings}

A {\it graph} $G$ is a pair $(V,E)$, where $V$ is a finite set of {\it vertices} and $E$ is a set of $2$-subsets of $V$, called {\it edges}. The {\it order} of $G$ is the number of vertices, denoted by $|G|$. Correspondingly, $e(G)$ means the numbers of edges. The {\it degree} of a vertex $v$ in $G$ is the number of edges containing $v$, and it is denoted by $d_G(v).$ A graph is called {\it complete} if each pair of vertices is contained in an edge. We write the complete graph with order $n$ as $K_n$.

We have given a one-to-one correspondence between a ternary vector and its labeled support. Here we give another one-to-one correspondence between a ternary vector and a colored complete subgraph. Given a vector $\bm u\in I_3^n$, we map it to a complete subgraph (clique) $K_{\bm u}$ of $K_n$ with vertex set $supp(\bm u)$ equipped with a $2$-coloring, such that the vertex $i\in supp(\bm u)$ is colored by $\bm u_i$.  By these correspondences, a vector in $I_3^n$, a labeled support in $[n]\times[2]$, and a $2$-colored clique are indeed the same objects, so they share the same notations like weight and type.

Next we will restate Fact \ref{Transfer} in the graph packing language. The {\it packing} of a graph $G$ is a set of its subgraphs $\M=\{G_1, G_2, \ldots, G_s\}$ such that  any edge of $G$ appears in at most one subgraph in $\M$. If the packing $\M$ covers all edges in $G$, we call it a {\it decomposition} of $G$. If all $G_i, i\in[s]$ are isomorphic to some graph $H$, then we call $\M$ an {\it $H$-packing.} Further if an $H$-packing $\M$ is also a decomposition, we call it an {\it $H$-decomposition.}
We have the following corollary from Fact \ref{Transfer}.
\begin{corollary}\label{transfer}
A ternary CWC code $\mathcal C$ with weight $w$ is an $(n, 2w-2, w)_3$ code if and only if the following conditions are satisfied:
\begin{itemize}
\item[$(a')$] The set $\{K_{\bm u}:\bm u\in \mathcal C\}$  forms a packing of $K_n$.
\item[$(b')$] Each vertex $i\in [n]$ is colored by $2$ in $\mathcal C$ at most once, i.e. there exists at most one codeword $\bm u\in \mathcal C$ such that $i$ is colored by 2 in $K_{\bm u}$.
\end{itemize}
\end{corollary}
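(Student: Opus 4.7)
The plan is to derive Corollary~\ref{transfer} directly from Fact~\ref{Transfer} by carrying each of the two conditions across the established one-to-one correspondence between a codeword $\bm u\in I_3^n$ and the $2$-colored clique $K_{\bm u}$ on vertex set $supp(\bm u)$.

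First I would unpack the correspondence at the level of single pairs of codewords. For any two codewords $\bm u,\bm v\in\mathcal C$, the vertex set shared by the cliques $K_{\bm u}$ and $K_{\bm v}$ is exactly $supp(\bm u)\cap supp(\bm v)$, and the edge set shared by them consists of the pairs in $\binom{supp(\bm u)\cap supp(\bm v)}{2}$. Therefore $|supp(\bm u)\cap supp(\bm v)|\le 1$ holds for all distinct $\bm u,\bm v\in\mathcal C$ if and only if no edge of $K_n$ lies in two distinct $K_{\bm u}$'s, which is exactly the statement that $\{K_{\bm u}:\bm u\in\mathcal C\}$ is a packing of $K_n$. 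This proves the equivalence (a)$\Leftrightarrow$(a$'$).

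Next I would observe that the equivalence (b)$\Leftrightarrow$(b$'$) is an immediate rewriting: by definition of $K_{\bm u}$, the position $i\in[n]$ is labeled by $2$ in $\bm u$ precisely when $i\in supp(\bm u)$ and the vertex $i$ is colored by $2$ in $K_{\bm u}$. Hence ``there exist distinct $\bm u,\bm v\in\mathcal C$ with $u_i=v_i=2$'' is just ``the vertex $i$ is colored by $2$ in at least two members of $\{K_{\bm u}:\bm u\in\mathcal C\}$,'' so the negations of (b) and (b$'$) coincide.

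Combining these two equivalences with Fact~\ref{Transfer} yields the corollary. There is no genuine obstacle here: the statement is a notational recasting, and the only thing to be careful about is the two-way implication in step~1, where one must note that a common vertex alone does not produce a common edge, so sharing exactly one vertex is compatible with being a packing.
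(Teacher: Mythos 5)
Your proposal is correct and matches the paper's intent: the paper simply states that Corollary~\ref{transfer} is straightforward from Fact~\ref{Transfer}, and your argument is exactly that direct translation, noting that condition (a) corresponds to edge-disjointness of the cliques $K_{\bm u}$ and condition (b) is a verbatim restatement of $(b')$. Nothing further is needed.
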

The proof of Corollary \ref{transfer} is straightforward from Fact \ref{Transfer}. By Corollary \ref{transfer}, the problem of finding an optimal $(n, 2w-2, w)_3$ code turns to the one of finding a maximum-sized packing of $K_n$ by colored cliques with the same weight $w$ satisfying condition $(b')$. Here we point out that the number of codewords of type $1^a2^b$ with $b>0$ in an $(n, 2w-2, w)_3$ code  is no more than $n$ due to condition $(b')$. This means the set of codewords of type $1^w$ is an absolute majority when the upper bound $B(n)+n$ is attained for large $n$.

Our desired CWC is constructed by a packing of $K_n$ with two parts $\mathcal P\cup \mathcal S$, where $\mathcal P$ consists of all cliques of order $w$ in the packing, and $\mathcal S$ consists of all others. So $\mathcal P$ corresponds to all codewords of type $1^w$ and $\mathcal S$ corresponds to at most $n$ codewords of type $1^a2^b$ with some $b>0$. The theorem below given in \cite{alon1998packing} guarantees the existence of $\mathcal P$ as long as we have a suitable $\mathcal S$, which has  been used in \cite{chen2020optimal} for a construction of optimal CWCs of distance $2w-2$ for several congruence classes. We need some notations to state this theorem.

For a graph $H$ without isolated vertices, let $\gcd(H)$ denote the greatest common divisor of the degrees of all vertices of $H$. A graph $G$ is called {\it $d$-divisible} if  $ \gcd(G)$ is divisible by $d$, while $G$ is called {\it nowhere $d$-divisible} if no vertex of $G$ has degree divisible by  $d$. The $H$-packing number of $G$, denoted by $P(H, G)$ is the maximum cardinality of an $H$-packing of $G$.

\begin{theorem}\label{alon}\cite{alon1998packing}
  Let $H$ be a graph with $h$ edges, and let $\gcd(H)=e$. Then there exist $N=N(H)$, and $\varepsilon=\varepsilon(H)$ such that for any $e$-divisible or nowhere $e$-divisible graph $G=(V,E)$ with $n>N(H)$ vertices and $\delta(G)>(1-\varepsilon(H))n$, \[P(H,G)=\left\lfloor\frac{\sum_{v\in V}\alpha_v}{2h}\right\rfloor,\] unless when $G$ is $e$-divisible and $0<|E|\pmod{h}\leq \frac{e^2}{2}$, in which case  \[P(H,G)=\left\lfloor\frac{\sum_{v\in V}\alpha_v}{2h}\right\rfloor-1.\]  Here, $\alpha_v$ is the degree of vertex $v$, rounded down to the closest multiple of $e$.
\end{theorem}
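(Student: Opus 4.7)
The plan is to establish the claimed formula by matching upper and lower bounds. For the upper bound, I would first fix any $H$-packing $\M$ of $G$ and a vertex $v\in V$. Each copy $H'\in\M$ in which $v$ plays the role of some vertex $u\in V(H)$ consumes exactly $d_H(u)$ of the edges of $G$ incident to $v$, and by the definition of $e=\gcd(H)$ this integer is a multiple of $e$. Summing over all copies in $\M$ that contain $v$ yields a multiple of $e$ that is bounded above by $d_G(v)$, hence by $\alpha_v$. Double-counting edges then gives $h|\M|=\sum_{H'\in\M}e(H')\le\tfrac{1}{2}\sum_v\alpha_v$, and taking the integer floor yields $|\M|\le\lfloor\sum_v\alpha_v/(2h)\rfloor$. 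The additional $-1$ in the exceptional case would come from tracking $|E(G)|\bmod h$: when $G$ is $e$-divisible one has $\sum_v\alpha_v=2|E(G)|$, and a residue computation shows that if $0<|E(G)|\bmod h\le e^2/2$ the edges unused by any packing must form at least $h$ more edges than $|E(G)|\bmod h$, forcing the loss of one extra copy.

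For the matching lower bound, the plan is a two-phase construction. In the first phase, I would apply the semi-random nibble method, in the tradition of R\"odl, Pippenger--Spencer, and Frankl--R\"odl, to build an almost-$H$-packing of $G$ covering all but an $o(n^2)$ fraction of the edges; the hypothesis $\delta(G)>(1-\varepsilon)n$ supplies the near-regularity needed to control the nibble concentration inequalities, and the divisibility dichotomy (either $e$-divisible throughout or nowhere $e$-divisible) is what allows the local decomposition probabilities to align consistently. In the second phase, I would reserve in advance a sparse robust absorbing structure consisting of pre-built copies of $H$, and use it to swallow the leftover edges output by the nibble into additional copies of $H$, iterating until either the upper bound is attained or the exceptional obstruction is reached.

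The principal obstacle will be the absorption step. The nibble leaves an $o(n^2)$-edge residue whose vertex degrees need not respect the modular structure that the upper-bound argument exploits, so the absorber must be engineered to fix residues modulo $e$ one vertex at a time while simultaneously preserving the global $|E|\bmod h$ count. The divisibility dichotomy in the hypothesis is precisely what makes this residue bookkeeping tractable: in the $e$-divisible case one can always restore $e$-divisibility after a local swap, and in the nowhere $e$-divisible case the residues at each vertex are uniformly bounded away from zero, which leaves room for corrective re-routings. Verifying that the only genuine obstruction occurs in the $e$-divisible regime with $0<|E|\bmod h\le e^2/2$ --- matching the upper-bound loss exactly --- will require a delicate case analysis on the final leftover subgraph and is where most of the technical effort resides.
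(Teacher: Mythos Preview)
The paper does not prove this theorem at all: it is quoted verbatim as a result of Alon, Caro, and Yuster \cite{alon1998packing} and used as a black box (see the immediately following Remark~\ref{alonremark}, which specializes it to $H=K_w$). So there is no ``paper's own proof'' to compare your proposal against.

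Your sketch is a plausible high-level outline of how results of this type are actually established --- counting/divisibility for the upper bound, a R\"odl-nibble phase followed by a clean-up/absorption phase for the lower bound --- and the upper-bound half is essentially correct as written. But as a self-contained proof it is far from complete: the absorption step you describe is the entire difficulty of the Alon--Caro--Yuster paper, and phrases like ``engineer an absorber to fix residues modulo $e$ one vertex at a time'' and ``delicate case analysis on the final leftover subgraph'' are placeholders for dozens of pages of argument, not proof steps. In particular, the claim that the exceptional $-1$ arises because ``the edges unused by any packing must form at least $h$ more edges than $|E(G)|\bmod h$'' is not justified and is not obviously true without further structural information about near-optimal packings. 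For the purposes of the present paper, however, none of this matters: the authors only need the statement, and they are entitled to cite it.
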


\begin{remark}\label{alonremark}
If we choose $H=K_{w}$, then $gcd(H)=w-1$. For any given positive integer $n$, if we can find a subgraph $G_n$ of $K_n,$ such that $e(H)|e(G_n)$ and for any vertex $v\in K_n$,
$$\left\{
    \begin{array}{lr}
    {d_{G_n}(v)\ge n-1-2w^2;}&\\
    {(w-1)|d_{G_n}(v),}&
    \end{array}
\right.
$$
then there exists an integer $N'=N'(H)$ such that if $n>N'$, an $K_w$-decomposition of $G_n$ is guaranteed.

In fact, by the condition $d_{G_n}(v)\ge n-1-2w^2$, we have
$\delta(G_n)\ge n-1-2w^2 > (1-\varepsilon)n$ for any $n>\frac{1+2w^2}{\varepsilon}$.
This remark can be immediately derived from Theorem \ref{alon}.
\end{remark}

By Corollary~\ref{transfer} and Remark~\ref{alonremark}, to construct an $(n, 2w-2, w)_3$ code of size $B(n)+n$, it is sufficient to find a subgraph $S$ of $K_n$ such that the following conditions are satisfied:
\begin{itemize}
\item[(1)] $G_n=K_n-S$ satisfies the conditions in Remark \ref{alonremark}, so that $G_n$ has a $K_w$-decomposition $\P$. The cliques in $\P$ will produce all codewords of type $1^w$.
    \item[(2)] $S$ has a packing $\S$ of size $B(n)+n-|\P|$ containing cliques of order less than $w$, such that each clique could be colored by $\{1,2\}$  independently to become a colored clique of weight $w$, and each vertex $i\in [n]$ is colored by $2$ at most once among all cliques in $\S$. The cliques in  $\S$ will yield all codewords of type $1^a2^b$ for some $b>0$.\end{itemize}
         If $\S$ is also a decomposition of $S$, we say that the desired code is a \emph{balanced code}, that is, the set $\P\cup \S$ of cliques corresponding to all codewords  forms a decomposition of $K_n$. How to construct such a graph $S$ with a packing $\mathcal S$ by colored cliques, or equivalently the sub-code $\S\subset \mathcal C$ consisting of only codewords of type $1^a2^b$ for some $b>0$,  needs some techniques. The most important tool used throughout this paper is the modular Golomb ruler.

\subsection{Modular Golomb rulers}
An $(n,w)$ \emph{modular Golomb ruler} \cite{Shearer2006difference} is a set of $w$ integers $\{a_1 , a_2 ,\ldots, a_w\}\subset \bbZ_n$, such that all of
the differences, $\{a_i- a_j\mid 1 \leq i \neq j \leq w\}$, are distinct and nonzero. Notice that all calculations here are in $\bbZ_n$. The set $\{a_i- a_j\mid 1 \leq i \neq j \leq w\}$ is called the \emph{set of differences} of the modular Golomb ruler. Given an $(n,w-1)$ modular Golomb ruler $\{a_1, a_2,\ldots, a_{w-1}\}$, we construct $n$ codewords $\bm a_i=\{(a_1+i)_2, (a_2+i)_1,\ldots, (a_{w-1}+i)_1\}\subset \bbZ_n\times[2]$ of type $1^{w-2}2^1$ for any $i\in\bbZ_n$. Here the set of positions $[n]$ is replaced by $\bbZ_n$ with a natural order. These $n$ codewords have pairwise distances at least $2w-2$ by definition, and each position in $\bbZ_n$ is labeled by $2$ exactly once. Using  the language of graphs, these colored cliques of type $1^{w-2}2^1$ are edge-disjoint on $\bbZ_n$ and each vertex in $\bbZ_n$ is colored by $2$ exactly once among these colored cliques.

\begin{example}\label{exampleforgolomb}
The set $\{0, 1, 3\}$ is an $(8, 3)$ modular Golomb ruler. The eight codewords are $\bm a_0=\{0_2, 1_1, 3_1\},$ $\bm a_1=\{1_2, 2_1, 4_1\},$ $\bm a_2=\{2_2, 3_1, 5_1\},$ $\bm a_3=\{3_2, 4_1, 6_1\},$ $\bm a_4=\{4_2, 5_1, 7_1\},$ $\bm a_5=\{5_2, 6_1, 0_1\},$ $\bm a_6=\{6_2, 7_1, 1_1\}$ and $\bm a_7=\{7_2, 0_1, 2_1\}.$ Each vertex in $\bbZ_8$ is colored by $2$ exactly once.
\end{example}

We show an example to explain how we use modular Golomb ruler to get subgraphs $S$ and $G_n$ of $K_n$ and then lead to an optimal code. The example below is from \cite[Theorem VI.2.]{chen2020optimal}.

\begin{example}\label{egchen}
When $n=\Omega(w^2)$, there exists an $(n, w-1)$ modular Golomb ruler \cite{Shearer2006difference}. Let the whole graph $K_n$ be of vertex set $\bbZ_n.$ By our discussion above, the $n$ codewords $\bm a_i$, $i\in\bbZ_n$ from the modular Golomb ruler form a packing by $n$ colored cliques of type $1^{w-2}2^1$. Let $S=\cup_{i\in {\bbZ_n}}K_{\bm a_i}$, which has a trivial decomposition of size $n$, and let $G_n=K_n-S.$  For any $v\in \bbZ_n$, $d_{G_n}(v)=n-1-(w-1)(w-2).$ Further, there are $\frac{n(n-1)}2-n\frac{(w-1)(w-2)}2$ edges in $G_n$. When $n\equiv w \mod w(w-1),$ it is easy to check that $G_n$ satisfies all conditions in Remark \ref{alonremark} and then there exists a decomposition $\mathcal P$ of $G_n$ by colored cliques of type $1^w$ as long as $n>N'(w).$ With simple calculations we get $|\mathcal P|=B(n).$ Hence the code $\mathcal C=\{\bm a_i: i\in \bbZ_n\}\cup\mathcal P$ forms a balanced $(n, 2w-2, w)_3$ code of size $B(n)+n$ by Corollary \ref{transfer}. This lead to the following conclusion:

Given any $w\ge  5$, if $n\equiv w \mod w(w-1),$ $A_3(n, 2w-2, w)=B(n)+n$ for any sufficiently large integer $n.$
\end{example}

A finite set $B=\{b_1, b_2, \ldots, b_m\}\subset [n-1]$ is  \emph{free} from an $(n, w)$ modular Golomb ruler $\{a_1, \ldots, a_w\}$ if all $b_i, i\in[m]$, as elements of $\bbZ_n$, do not appear in the set of differences. Such an $(n, w)$ modular Golomb ruler is called \emph{$B$-free}. It is trivial to observe that the $(8, 3)$ modular Golomb ruler in Example \ref{exampleforgolomb} is $\{4\}$-free. Specially, by definition if $\{a_1, \ldots, a_w\}$ is a $B$-free $(n, w)$ modular Golomb ruler, so is the set $\{a_1+i, \ldots, a_w+i\}$  for all $i\in \bbZ_n.$

\begin{lemma}\label{golomb}
For any integer $w\geq 5$ and a  set $B\subset[n-1]$ with the largest element $b$, there exists a $B$-free $(n, w-1)$ modular Golomb ruler when $n\ge w(2b+w^2)$. Specially when $B$ is empty, we  define $b=0$.
\end{lemma}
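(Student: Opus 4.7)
The plan is to construct the ruler greedily: fix $a_1 = 0$, and for $k = 1, 2, \ldots, w-2$, pick $a_{k+1} \in \bbZ_n$ so that $\{a_1, \ldots, a_{k+1}\}$ remains both a partial modular Golomb ruler and $B$-free. The whole argument then reduces to showing that, at every step, the set of residues forbidden for $a_{k+1}$ has size strictly less than $n$, so that a valid choice always exists.

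The next step is to enumerate and bound the forbidden residues, which split into four groups: (i) distinctness, $a_{k+1} \ne a_i$ for $i \in [k]$, at most $k$ values; (ii) new-versus-old Golomb collisions, $a_{k+1} - a_i \ne a_j - a_l$ (equivalently $a_{k+1} \ne a_i + a_j - a_l$) for $i \in [k]$ and ordered $(j, l) \in [k]^2$ with $j \ne l$, at most $k^2(k-1)$ values; (iii) collisions among the new differences themselves, $2a_{k+1} \ne a_i + a_j$ for $i, j \in [k]$, at most $k(k+1)$ values since each equation in $\bbZ_n$ has at most two solutions; (iv) $B$-free constraints, $a_{k+1} \ne a_i \pm b'$ for $i \in [k]$ and $b' \in B$, at most $2k|B| \le 2kb$ values.

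The final step is a routine arithmetic check at the worst-case index $k = w-2$: summing (i)--(iv) gives a cubic whose leading term is $(w-2)^2(w-3) < w^3$, and the slack $6w^2 - 14w + 12 > 0$ (which holds for all $w \ge 1$) absorbs the remaining lower-order contributions, so the total is strictly less than $w^3 + 2wb \le n$. Iterating the greedy choice for $k = 1, 2, \ldots, w-2$ then produces the desired $B$-free $(n, w-1)$ modular Golomb ruler; the empty-$B$ case follows immediately from the same count with $|B| = 0$.

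The main subtlety is in group (ii). A priori, the alternative form $a_i - a_{k+1} \ne a_j - a_l$ would produce another $k^2(k-1)$ forbidden values, which would roughly double the cubic term and weaken the required bound to $n \gtrsim 2w^3 + 2wb$. However, the two forbidden sets $\{a_i + a_j - a_l\}$ and $\{a_i - a_j + a_l\}$ coincide as subsets of $\bbZ_n$ after the relabeling $j \leftrightarrow l$ (the condition $j \ne l$ being symmetric), so the count in (ii) is not doubled. This single observation is what aligns the greedy bound with the hypothesis $n \ge w(2b + w^2)$.
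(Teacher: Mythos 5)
Your proof is correct and follows essentially the same route as the paper: a greedy construction starting from $a_1=0$, a union bound on the residues forbidden for the next element, and a pigeonhole conclusion under $n\ge w(2b+w^2)$, with the worst-case arithmetic at $k=w-2$ checking out. In fact your enumeration is slightly more thorough than the paper's two displayed conditions, since you also explicitly exclude collisions among the new differences themselves (the constraint $2a_{k+1}\ne a_i+a_j$) and correctly observe that the reversed differences contribute no additional forbidden values because the old difference set is closed under negation.
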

\begin{proof} We will greedily find $w-1$ pairwise distinct integers $a_1, a_2, \cdots, a_{w-1}$ in $[0, n-1]$, which form a $B$-free $(n, w-1)$ modular Golomb ruler when viewed as elements of $\bbZ_n$.
Start with $a_1=0$. The set $\{0\}$ is clearly a $B$-free $(n, 1)$ modular Golomb ruler.

For any $k\in[2, w-1]$, assume that  there exists pairwise distinct integers $a_1, \ldots, a_{k-1}$ in $[0, n-1]$ such that $\{a_1, \ldots, a_{k-1}\}$ forms a $B$-free  $(n, k-1)$ modular Golomb ruler. We consider the $k$th step.
If there exists an integer $v\in [0, n-1]$, such that both the following conditions are satisfied,
\begin{itemize}
\item for any $i\in[k-1],$ $b<v-a_i<n-b$;
\item for any $i\in[k-1],$ $v-a_i$ is not in the set of differences of $\{a_1, \ldots, a_{k-1}\}$,
\end{itemize}
then $\{a_1, \ldots, a_{k}\}$ is a $B$-free $(n, k)$ modular Golomb ruler after we set $a_k=v.$
There are at most $(k-1)(2b+1)$ integers in $[n-1]$ violate the first condition and at most $2(k-1)\binom{k-1}{2}$ integers violate the second one. When $n\ge w(2b+w^2)$, by pigeonhole principle such $v$ always exists.
\end{proof}

Before ending this section, we mention that in our constructions in the sections to follow, we usually use $\bbZ_{n-k} \cup \{b_1, \ldots, b_k\}$ with a natural order, to denote the set of positions of the code, or equivalently the vertex set of $K_n$ to be packed. This setting will help us to give a clearer prospect for the structure of our desired subgraph $S$ and its packing $\mathcal S$ with colored cliques.

\section{General Idea}\label{mainconsturctionfor01}

In this section, we illustrate the idea of our main construction of $\mathcal S$ by simple cases such that the resulting code is optimal and balanced. To make our constructions simple, we assume that  $\mathcal S$  only contains codewords of types $1^{w-2}2^1$ and $1^{w-4}2^2$. Let $x$, $y$ and $z$ denote the number of codewords of types $1^w$, $1^{w-2}2^1$ and $1^{w-4}2^2$, respectively.

Based on the above assumption, we first figure out the valid numbers $x$, $y$, and $z$ of codewords with different types by the balanced property. We start with $x=B(n)$, $y=n$, and $z=0$,  which in total reaches the upper bound. Note that the current $S$ is the union of cliques of order $w-1$ from these $n$ codewords of type $1^{w-2}2^1$. Let $\ell$ be the number in the range $0\leq \ell< {w\choose 2}$ such that \begin{equation}\label{1l}\ell\equiv {n \choose 2}-n{w-1 \choose 2}  \mod {{w\choose 2}},\end{equation} that is
\begin{equation}\label{2l}2\ell\equiv n(n-1)-n(w-1)(w-2) \mod {w(w-1)}.\end{equation}
 Let $G_n=K_n-S$, then the right hand side  of Equation~(\ref{1l}) is the number of edges in $G_n$. If $\ell\neq 0$, i.e., $e(K_w)\nmid e(G_n)$, then there does not exist a $K_w$-decomposition of $G_n$, that is the code can not be balanced. To make it balanced, we need to modify the initial numbers $x$, $y$ and $z$ so that all cliques consume $\ell$ extra edges. We do the following two operations without changing the sum $x+y+z$.
\begin{itemize}
\item[~~(O1)] Replace one codeword of type $1^{w-2}2^1$ by a codeword of type $1^w$ to consume $(w-1)$ more edges, i.e., $x\rightarrow x+1$, $y\rightarrow y-1$;
\item[  (O2)] Replace two codewords of type $1^{w-2}2^1$ by a codeword of type $1^w$ and one of type  $1^{w-4}2^2$ to consume  one more edge, i.e., $y\rightarrow y-2$, $x\rightarrow x+1$ and $z\rightarrow z+1$.
\end{itemize}
 By these two operations, we are able to figure out some values of $x$, $y$ and $z$, such that the total number of codewords $x+y+z=B(n)+n$ still holds, and  $\ell$ extra edges are all consumed,  that is, they are  valid for a balanced code. Suppose that we do $a$ times of (O1) and $b$ times of (O2), then $(a,b)$ is just any nonnegative integral solution to \begin{equation}\label{ab}a(w-1)+b=\ell.\end{equation} Then $x=B(n)+a+b$, $y=n-a-2b$, $z=b$. We always choose $b$ as the smallest nonnegative integer such that Equation~(\ref{ab}) has a nonnegative integral solution.

Next, we try to construct $y$ codewords of type $1^{w-2}2^1$ and $z$ codewords of type $1^{w-4}2^2$ to form a new subgraph $S$. It is clear now that the new $G_n=K_n-S$ will satisfy $e(K_w)\mid e(G_n)$. To apply  Theorem~\ref{alon} or Remark~\ref{alonremark}, we need that $G_n$ is $(w-1)$-divisible, or equivalently, for all $v\in V(K_n)$, $n-1-d_S(v)\equiv 0 \mod {(w-1)}$. For any $v\in V(K_n),$ we define $y_v$ and $z_v$  the number of cliques of type $1^{w-2}2^1$ and $1^{w-4}2^2$, respectively, containing $v$ in $\mathcal S$. Thus $d_S(v)=y_v(w-2)+z_v(w-3)$ and $$n-1-d_S(v)=n-1-y_v(w-2)-z_v(w-3)\equiv n-1+y_v+2z_v\equiv 0\mod {(w-1)}.$$ Denote $R(v)\triangleq y_v+2z_v$.
For simplicity, we assume $R(v)$ can only be the least two non-negative integers satisfying this equation. Assume
that $n\equiv t\mod (w-1)$ with $0\leq t \leq w-2$. Let
\begin{equation}\label{rm}
R_m\triangleq\left\{
\begin{array}{ll}
1, & \text{ if }t=0;\\
0, & \text{ if } t=1;\\
w-t, & \text{ if } t\geq 2.
\end{array}
\right. \end{equation}
Then $R(v)\in \{R_m,R_m+w-1\}$.


When constructing $\mathcal S$, we follow an additional rule that each vertex  can be in at most one clique of type $1^{w-4}2^2,$ i.e., $z_v=0$ or $1$ for all $v.$ Since there are in total $z=b$ cliques of type $1^{w-4}2^2$, $|\{v\in V(K_n): z_v=1\}|=b(w-2).$  Note that $d_S(v)=(w-2)R(v)-(w-1)z_v$, then $\sum_{v}d_S(v) =\sum_{v}(w-2)R(v)-\sum_{\{v: z_v=1\}}(w-1) =\sum_v(w-2)R(v)-b(w-1)(w-2).$
Let $c$ be the number of vertices having $R(v)=R_m$ in $S$. By the handshaking lemma, $\sum_{v}d_S(v)=2e(S)$,
 and we have
\begin{equation*}
c(w-2)R_m+(n-c)(w-2)(R_m+w-1)-b(w-2)(w-1)=(n-a-2b)(w-1)(w-2)+b(w-2)(w-3).
\end{equation*}
That is \begin{equation}\label{dsv}
c(w-1)=nR_m+a(w-1)+2b.
\end{equation}
So far, we have two equations (\ref{ab}) and~(\ref{dsv}), and three undetermined parameters $a,b,c$.
It is possible for us to find a common nonnegative integral solution $(a,b,c)$ to both equations. Note that the value $c$ reflects the distribution of vertices in codewords of type $1^{w-2}2^1$ and $1^{w-4}2^2$ in some way. In Example~\ref{egchen}, an $(n, w-1)$ modular Golomb ruler produces $n$ codewords of type $1^{w-2}2^1$ such that each vertex occurs in exactly $(w-1)$ such codewords, which contributes $(w-1)$ to $y_v$ or $R(v)$ for each $v\in \bbZ_n$.

Based on the above valid choices of $a,b,c$, we have the following result.

\begin{lemma}\label{condofS}
Given an integer $w\ge 5$ and a sufficiently large $n\equiv t\mod (w-1)$ for some $t\in [0,w-2]$, let $a, b, c$ and $ R_m$ be non-negative integers satisfying Equations (\ref{ab})-(\ref{dsv}).  If $\S$ is an $(n,2w-2,w)_3$ code consisting of $n-a-2b$ codewords of type $1^{w-2}2^1$ and $b$ codewords of type $1^{w-4}2^2$, such that
\begin{itemize}
\item[(1)] there are exactly $c$ positions (or vertices $v$ of $K_n$) with $R(v)=R_m$, all others are $R_m+w-1$;
\item[(2)] the sets $supp(\bm u)$ for all codewords $\bm u$ of type $1^{w-4}2^2$ are disjoint,
\end{itemize}
then there is an optimal balanced $(n, 2w-2, w)_3$ code and $A_3(n, 2w-2, w)=B(n)+n$.
\end{lemma}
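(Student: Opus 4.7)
The plan is to build the optimal balanced code as $\mathcal{C}=\mathcal{P}\cup\mathcal{S}$, where $\mathcal{P}$ is a $K_w$-decomposition of the residual graph $G_n:=K_n-S$, with $S=\bigcup_{\bm u\in\mathcal{S}}K_{\bm u}$ denoting the union of the underlying cliques of $\mathcal{S}$. The decomposition $\mathcal{P}$ will be supplied by Theorem~\ref{alon} through Remark~\ref{alonremark}; I then color each clique of $\mathcal{P}$ entirely by the symbol $1$ to get codewords of type $1^w$. Since $\mathcal{S}$ already decomposes $S$ by condition~$(a')$ of Corollary~\ref{transfer}, the family $\mathcal{P}\cup\mathcal{S}$ decomposes $K_n$, so $(a')$ holds for $\mathcal{C}$; $(b')$ holds because only codewords of $\mathcal{S}$ carry the symbol~$2$ and $\mathcal{S}$ is assumed to be an $(n,2w-2,w)_3$ code. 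Hence $\mathcal{C}$ will be a balanced $(n,2w-2,w)_3$ code.

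The main step is verifying the two hypotheses of Remark~\ref{alonremark} for $G_n$. For the edge-count, I would compute $e(S)=(n-a-2b)\binom{w-1}{2}+b\binom{w-2}{2}$ and combine the identity $\binom{w}{2}+\binom{w-2}{2}-2\binom{w-1}{2}=1$ with Equation~(\ref{ab}) to obtain $e(G_n)=(B(n)+a+b)\binom{w}{2}$; this forces $\binom{w}{2}\mid e(G_n)$ and pins down $|\mathcal{P}|=B(n)+a+b$. For the vertex-degree divisibility, $d_S(v)=y_v(w-2)+z_v(w-3)\equiv -R(v)\pmod{w-1}$ gives
\[ d_{G_n}(v)\equiv (n-1)+R(v)\equiv (t-1)+R(v)\pmod{w-1}. \]
Condition~(1) forces $R(v)\in\{R_m,R_m+w-1\}$, and Equation~(\ref{rm}) was engineered so that $(t-1)+R_m\equiv 0\pmod{w-1}$ in each of the three regimes of $t$; hence $(w-1)\mid d_{G_n}(v)$ for every $v$. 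The minimum-degree bound is immediate, since each $v$ lies in at most $R(v)\le R_m+w-1\le 2w-2$ cliques of $\mathcal{S}$ and each contributes at most $w-2$ to $d_S(v)$, so $d_{G_n}(v)\ge n-1-(2w-2)(w-2)>n-1-2w^2$.

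Applying Remark~\ref{alonremark} then yields the $K_w$-decomposition $\mathcal{P}$ of $G_n$ for all $n>N'(w)$, and plugging in gives $|\mathcal{C}|=(B(n)+a+b)+(n-a-2b)+b=B(n)+n$, matching the upper bound of Lemma~\ref{upperbound} and so proving $A_3(n,2w-2,w)=B(n)+n$. I expect the only obstacle here to be a careful bookkeeping check that Equations~(\ref{ab})--(\ref{dsv}) and the definition~(\ref{rm}) of $R_m$ line up exactly with the divisibility hypotheses of Remark~\ref{alonremark}. Condition~(2), that the supports of the type-$1^{w-4}2^2$ codewords are pairwise disjoint (equivalently $z_v\in\{0,1\}$), enters only to keep the assumed distribution of $R$-values consistent with the clique-incidence sums behind Equation~(\ref{dsv}). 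The genuinely hard part---actually constructing such an $\mathcal{S}$ for each residue class $n\bmod(w-1)$---is what occupies the subsequent sections.
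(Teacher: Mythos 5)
Your proposal is correct and follows essentially the same route as the paper's proof: form $S=\bigcup_{\bm u\in\mathcal{S}}K_{\bm u}$ and $G_n=K_n-S$, verify the $(w-1)$-divisibility of degrees via $d_{G_n}(v)\equiv n-1+R(v)\pmod{w-1}$, the minimum-degree bound, and the edge count $e(G_n)=(B(n)+a+b)\binom{w}{2}$ using Equation~(\ref{ab}), then invoke Remark~\ref{alonremark} to get the $K_w$-decomposition, color it by $1$, and match the upper bound of Lemma~\ref{upperbound}. The only cosmetic difference is that you work out the edge identity slightly more explicitly; the substance is identical.
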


\begin{proof}
Let $S=\cup_{\bm a\in \S}K_{\bm a}$ be a graph and $G_n=K_n-S.$ We apply Remark \ref{alonremark} to prove that $G_n$ has a $K_w$-decomposition of size $B(n)+a+b$.

First, for any $v\in V(K_n),$ $d_{G_n}(v)=n-1-d_S(V)\ge n-1-R(v)(w-2)>n-1-2w^2.$ Specially, since $d_{G_n}(v)\equiv n-1+R(v) \mod (w-1),$ we have $d_{G_n}(v)\equiv 0\mod (w-1),$ i.e. $G_n$ is $(w-1)$-divisible.

Further, we have
$$
\begin{aligned}
\frac{e(G_n)}{e(K_w)} &=\frac{2e(G_n)}{2e(K_w)}\\
&=\frac{n(n-1)-n(w-1)(w-2)}{w(w-1)}+\frac{(a+2b)(w-1)(w-2)-b(w-2)(w-3)}{w(w-1)}\\
&=B(n)+\frac{2\ell+(a+2b)(w-1)(w-2)-b(w-2)(w-3)}{w(w-1)}\\
&=B(n)+a+b,
\end{aligned}
$$
where the last step is by (\ref{ab}).

With all conditions satisfied, Remark \ref{alonremark} gives us an integer $N=N(w)$ such that if $n>N,$ there exists a $K_w$-decomposition $\mathcal P$ of $G_n$. By vertex-coloring all cliques in $\mathcal P$ with color $1$, $\mathcal P$ is a code with $B(n)+a+b$ codewords of type $1^w.$

Finally, by Corollary \ref{transfer} and Lemma \ref{upperbound}, $A_3(n, 2w-2, w)=B(n)+n$ and the code $\mathcal C=\S\cup\mathcal P$ is an optimal balanced $(n, 2w-2, w)_3$ code.
\end{proof}

By Lemma~\ref{condofS}, to show that an optimal balanced $(n, 2w-2, w)_3$ code exists with $A_3(n, 2w-2, w)=B(n)+n$, we only need to find an $(n, 2w-2, w)_3$ code $\S$ satisfying Lemma~\ref{condofS}.
Next, we take $n\equiv 0,1\mod (w-1)$ to show the way of computing the valid triple $(a,b,c)$, and then the numbers $x, y$ and $z$.  By computing the right hand side of Equation~(\ref{2l}), we know that $(w-1)\mid 2\ell$. Assume that $(w-1)\mid \ell$ in the following constructions, since in this case we only need to do operation (O1). We will give constructions for code $\S$ satisfying Lemma~\ref{condofS}.


\subsection{$n\equiv 1\mod (w-1)$ and $(w-1)\mid \ell$}\label{seciii:a}
Let $n=h(w-1)+1$, and $2\ell=2k(w-1)$  for some $k\in[0, \lceil\frac w2\rceil-1]$.
Then we do operation (O1) $k$ times to consume $\ell=k(w-1)$ edges, that is $a=k$ and $b=0$. The value $R(v)\in \{0, w-1\}$ and $R_m=0$ in this case, so $c=a=k$ by (\ref{dsv}). The candidates of numbers of codewords of types  $1^w$, $1^{w-2}2^1$ and $1^{w-4}2^2$ are $x=B(n)+k$, $y=n-k$ and $z=0$.  Let $K_n$ be the complete graph with vertex set $\bbZ_{n-k}\cup \{b_i:i\in[k]\}$. The code $\S$ is constructed as follows.


When $h$ is large enough such that $n-k>w^3$, by Lemma \ref{golomb}, there exists an $(n-k, w-1)$ modular Golomb ruler $\{a_1, a_2, \ldots, a_{w-1}\}$ on $\bbZ_{n-k}$.
Define codewords $\bm a_i=\{(a_1+i)_2, (a_2+i)_1, (a_3+i)_1, \ldots, (a_{w-1}+i)_1\}$ of type $1^{w-2}2^1$ for $i\in \bbZ_{n-k}$. This gives us $n-k$ cliques of type $1^{w-2}2^1$ which form a packing of $K_n$.
Define $\mathcal S=\{{\bm a_i}: i\in \bbZ_{n-k}\}$. Each vertex $v$ in $\bbZ_{n-k}$ is colored by $2$ exactly once, and contained in $w-1$ different colored cliques of $\mathcal S$, that is $R(v)=w-1$. The $c=k$ vertices $b_i$ are not contained in any colored clique of $\mathcal S$, that is $R(v)=0$.
So $\S$ is an $(n,2w-2,w)_3$ code satisfying Lemma~\ref{condofS}, thus $A_3(n, 2w-2, w)=B(n)+n$ for sufficiently large $n$.



\subsection{$n\equiv 0\mod (w-1)$ and $(w-1)\mid \ell$}\label{seciii:b}
 Let $n=h(w-1)$, and $2\ell=2k(w-1)$  for some $k\in[0, \lceil\frac w2\rceil-1]$.
Then we do operation (O1) $k$ times to consume $\ell=k(w-1)$ edges, that is $a=k$ and $b=0$. The value $R(v)\in \{1, w\}$ and $R_m=1$ in this case, so $c=h+a=h+k$.  The numbers of codewords of types  $1^w$, $1^{w-2}2^1$ and $1^{w-4}2^2$ will be $x=B(n)+k$, $y=n-k$ and $z=0$.

Let $n'=n-h-k$, and let the vertex set of $K_n$ be the union of three parts:
$\bbZ_{n'}$, $\{b_i\}_{i\in[h]}$ and $\{c_j\}_{j\in[k]}.$ The set $\{b_i\}_{i\in[h]} \cup \{c_j\}_{j\in[k]}$ will be the $c=h+k$ vertices with $R(v)=1$. The code $\S$ is constructed as follows.

When $h$ is large enough such that $n'>2w^3$, by Lemma \ref{golomb} there exists a $[w-1]$-free $(n', w-1)$ modular Golomb ruler $\{a_1, a_2, \ldots, a_{w-1}\}$. From this, we get $n'$ codewords of type $1^{w-2}2^1$, $\bm a_i=\{(a_1+i)_2, (a_2+i)_1, (a_3+i)_1, \ldots, (a_{w-1}+i)_1\}$ for $i\in \bbZ_{n'}$. 
The other $h$ codewords $\bm s_i,$ $i\in[h]$ of type $1^{w-2}2^1$ are constructed as follows, whose supports form a partition of the vertex set of $K_n$. When $i\in[k]$, $\bm s_i=\{(b_i)_2, (c_i)_1\}\cup[(w-3)(i-1), (w-3)i-1]_1$; when $i\in[k+1, h]$, $\bm s_i=\{(b_i)_2\}\cup[(w-2)(i-1)-k, (w-2)i-k-1]_1$. Here, $[\cdot,\cdot]_1$ means that all integers in the interval are labeled by $1$. 

Let $\mathcal S=\{{\bm a_i}: i\in \bbZ_{n'}\}\cup\{{\bm s_i}: i\in[h]\}$. It is clear that each vertex of $K_n$ is colored by $2$ at most once. We claim that $\S$ as a set of colored cliques forms a packing of $K_n$. If it is false, there exists two different $\bm a, \bm b\in \S$, such that  $K_{\bm a}$ and $K_{\bm b}$ have one edge $uv$ in common. In other words, $u$ and $v$ are both in $supp(\bm a)\cap supp(\bm b).$ As $\{K_{\bm a_i}: i\in \bbZ_{n'}\}$ and $\{K_{\bm s_i}: i\in[h]\}$ are both packings already, we assume $\bm a=\bm s_i$ and $\bm b=\bm a_j$ for some $i\in[h]$ and $j\in \bbZ_{n'}.$ Thus, both $u$ and $v$ are in $\bbZ_{n'}$ and we may assume $u>v$ as integers. Since $u$ and $v$ are in $supp(\bm s_i)$, $u-v\in [w-1],$ which contradicts to the fact that $supp(\bm a_j)$ is a $[w-1]$-free modular Golomb ruler. So $\S$ is an $(n,2w-2,w)_3$ code. Further,
each vertex $v\in\bbZ_{n'}$ is contained in $w$ different colored cliques in $\mathcal S$, that is $R(v)=w$; each $v\not\in\bbZ_{n'}$ is contained in only one colored clique in $\mathcal S$, that is $R(v)=1$. So $\S$ satisfies Lemma~\ref{condofS}, and $A_3(n, 2w-2, w)=B(n)+n$ for sufficiently large $n$.
%


\section{Completing the Cases $n\equiv 0,1\mod (w-1)$}\label{additionalconstructionA}

Subsections~\ref{seciii:a} and \ref{seciii:b} give us two similar constructions of the required $(n, 2w-2, w)_3$ codes $\S$ when $n\equiv 0,1\mod (w-1)$ and $(w-1)\mid \ell$. The property $(w-1)\mid \ell$ always holds when $w$ is even. In this section, we assume that $w$ is odd,  and consider constructions of optimal  $(n, 2w-2, w)_3$ codes when $n\equiv 0, 1\mod (w-1)$ and $(w-1)\nmid \ell$.
\subsection{$n\equiv 0\mod (w-1)$ and $(w-1)\nmid \ell$}
Let $n=h(w-1)$ and $2\ell=(2k+1)(w-1)$ for some $k\in[0, \frac{w-3}2].$ Then we do operation (O1) $k$ times and operation (O2) $\frac{w-1}2$ times to consume $\ell=k(w-1)+\frac{w-1}2$ edges, that is $a=k$ and $b=\frac{w-1}2.$ The value $R(v)\in\{1, w\}$ and $R_m=1$ in this case, and $c=\frac{n}{w-1}+a+\frac{2b}{w-1}=h+k+1.$ So the candidates of numbers of codewords of type $1^w$, $1^{w-2}2^1$ and $1^{w-4}2^2$ are $x=B(n)+k+\frac{w-1}2$, $y=n-k-(w-1)$ and $z=\frac{w-1}2.$

Let $K_n$ be the complete graph with vertex set $\bbZ_{n'} \cup B\cup C$, where $n'=n-\frac{(w-1)(w-2)}2-(h+k+1)$. The set $B$ has $\frac{(w-1)(w-2)}2$ vertices,  $\bar{b}_i$, $i\in[w-1]$ and $b_j, j\in[\frac{(w-1)(w-4)}2]$. The set $C$ consists of $h+k+1$ vertices,  $\bar{c}_i$, $i\in[h+1]$ and $c_j$, $j\in[k]$. An $(n, 2w-2, w)_3$ code $\S$ consisting of $n-k-(w-1)$ codewords of type $1^{w-2}2^1$ and $\frac{w-1}2$ codewords of type $1^{w-4}2^2$ is given below.

\begin{construction}\label{6differentkinds}
Suppose $h$ is large enough so that $h\ge k+w^3$. We construct six disjoint classes of codewords below. 
\begin{itemize}
\item[(1)] The   $\frac{w-1}2$  codewords of type $1^{w-4}2^2$ are
    $$\bm z_i=\{(\bar{b}_{2i-1})_2, (\bar{b}_{2i})_2\}\cup\{(b_{(w-4)(i-1)+1})_1, (b_{(w-4)(i-1)+2})_1, \ldots, (b_{(w-4)i})_1\},\text{ for all } i\in\left[\frac{w-1}2\right].$$
    Let $\mathcal Z=\{\bm z_i: i\in[\frac{w-1}2]\}.$ Note that $supp(\bm z_i)$, $i\in[\frac{w-1}2]$ form a partition of the set $B$, and  each $\bar{b}_i$, $i\in[w-1]$ is labeled by $2$ exactly once.

\item[(2)] Since $n'>2w^3$, there exists a $[w-1]$-free $(n', w-1)$ Golomb ruler $\{a_1, a_2, \ldots, a_{w-1}\}$, which derives the first $n'$  codewords of type $1^{w-2}2^1$,
    $$\bm a_i=\{(a_1+i)_2, (a_2+i)_1, (a_3+i)_1, \ldots, (a_{w-1}+i)_1\}, \text{ for all } i\in \bbZ_{n'}.$$
    Let $\mathcal Y_1=\{\bm a_i: i\in\bbZ_{n'}\}$. Each vertex $v\in \bbZ_{n'}$ appears in exactly $w-1$ cliques in $\mathcal Y_1$ and is labeled by $2$ in exactly one codeword.

\item[(3)] Let $n_0=\frac{(w-1)(w-2)(w-4)}2$. Since $n'>n_0$, the next $\frac{(w-1)(w-4)}2$ codewords of type $1^{w-2}2^1$ are
    $$\bm s_i=\{(b_i)_2\}\cup[(w-2)(i-1),  (w-2)i-1]_1, \text{ for all } i\in \left[\frac{(w-1)(w-4)}2\right].$$
    Let $\mathcal Y_2=\{\bm s_i: i\in [\frac{(w-1)(w-4)}2]\}$. Note that $supp(\bm s_i)$, $i\in [\frac{(w-1)(w-4)}2]$ are pairwise disjoint, covering all $b_i$ in $B$, and elements in $[0,n_0-1]\subset \bbZ_{n'}$. Further, each $b_i, i\in[\frac{(w-1)(w-4)}2]$ is labeled by $2$ exactly once.

\item[(4)] Let $\alpha=(w-2)(w-1)+(w-3)\frac{(w-4)(w-1)}2$. Note that $n'\ge n_0+\alpha(w-3)$ and $h\ge \alpha+k$. We consider a sequence $r_1, r_2, \ldots, r_\alpha$ with elements in $B$ as follows. When $i\le (w-2)(w-1)$, $r_i=\bar{b}_{\lceil\frac i{w-2}\rceil}$; when $(w-2)(w-1)<i\le \alpha$, $r_i=b_{\lceil\frac {i-(w-1)(w-2)}{w-3}\rceil}.$ In this sequence, each $\bar{b}_i$, $i\in [w-1]$ appears $w-2$ times and each $b_i$, $i\in[\frac{(w-4)(w-1)}{2}]$ appears $w-3$ times.
    The third class of codewords of type  $1^{w-2}2^1$ are
    $$\bm y_i=\{(\bar{c}_i)_2, (r_i)_1\}\cup[n_0+(i-1)(w-3),  n_0+i(w-3)-1]_1,\text{ for all } i\in[\alpha].$$
    Let $\mathcal Y_3=\{\bm y_i: i\in[\alpha]\}.$ Each element from  $[n_0,\beta-1]\subset \bbZ_{n'}$ appears in exactly one clique in $\mathcal Y_3$, where $\beta=n_0+(w-3)\alpha$. Each $\bar{c}_i$, $i\in[\alpha]$ is labeled by $2$ once.

\item[(5)] Since $h\ge \alpha+k$, $h-k-\alpha+1$ is a positive integer. The fourth class of codewords of type  $1^{w-2}2^1$ are
    $$\bm y_{\alpha+t}=\{(\bar{c}_{\alpha+t})_2\}\cup[\beta+(w-2)(t-1),  \beta+(w-2)t-1]_1, \text{ for all } t\in[h-k-\alpha+1].$$
    Let $\mathcal Y_4=\{\bm y_i: i\in[\alpha+1, h-k+1]\}.$ Each element from  $[\beta,\gamma-1]$ in $\bbZ_{n'}$ appears in exactly one clique in $\mathcal Y_4$, where $\gamma=\beta+(w-2)(h-k-\alpha+1)$. Each $\bar{c}_i$, $i\in[\alpha+1, h-k+1]$ is labeled by $2$ once.

\item[(6)]The last $k$ codewords of type  $1^{w-2}2^1$ are
    $$\bm y_{h-k+1+i}=\{(\bar{c}_{h-k+1+i})_2, (c_i)_1\}\cup[\gamma+(w-3)(i-1), \gamma+(w-3)i-1]_1,$$
   for $i\in [k]$.  Let $\mathcal Y_5=\{\bm y_{h-k+1+i}: i\in[k]\}.$ The sets $supp(\bm y_{h-k+1+i}), i\in[k]$ are pairwise disjoint, and cover all the remaining elements in $C$ and $\bbZ_{n'}$. Further, each $\bar{c}_i$, $i\in[h-k+2, h+1]$ is labeled by $2$ once.


\end{itemize}
\end{construction}


 Finally, Let $\mathcal Y=\cup_{i\in[5]}\mathcal Y_i$ and $\mathcal S=\mathcal Z\cup\mathcal Y.$ All vertices in $K_n$ other than $c_i$, $i\in [k]$ are colored by $2$ exactly once in $\mathcal S.$
For any vertex $v\in C$, $R(v)=1.$ More specifically, $\bar{c}_i$ is only contained in  $\bm y_i$, while $c_j$ is only contained in $\bm y_{h-k+1+j}$.  Similarly, any vertex $v$ in $B$ is contained in exactly one clique in $\mathcal Z$ and $w-2$ cliques in $\mathcal Y,$ so $R(v)=w$. Each vertex $v\in \bbZ_{n'}$ is  contained in exactly $w$ cliques in $\mathcal Y$ and no clique in $\mathcal Z$, so $R(v)=w$.  It is left to check that $\S$ as a set of cliques forms a packing of $K_n$.

If it is false, there exists two different $\bm a, \bm b\in\mathcal S$ such that $K_{\bm a}$ and $K_{\bm b}$ have one edge $uv$ in common. Note that from modular Golomb ruler, $\Z\cup\mathcal Y_1$ is already a packing, and $\mathcal Y\backslash\mathcal Y_1$ is also a packing by simple verification. So we assume $\bm a \in \Z\cup\mathcal Y_1$ and $\bm b \in \mathcal Y\backslash\mathcal Y_1.$ If $\bm a\in\Z,$ all vertices in $\bm a$ are in  $B$ while no codeword in $\mathcal Y\backslash\mathcal Y_1$ has more than one element in $B$, hence $K_{\bm a}$ and $K_{\bm b}$ are edge disjoint. If $\bm a\in\mathcal Y_1$, then both $u$ and $v$ are in $\mathbb Z_{n'}.$ Suppose that $u>v$ as integers. Since $\{u, v\}\subset supp(\bm b)$ and $\bm b \in \mathcal Y\backslash\mathcal Y_1,$ by our construction $u-v\in[w-1]$, which contradicts to the fact that $supp(\bm a)$ is a $[w-1]$-free modular Golomb ruler.
%
%


 Combing the result above with Lemma~\ref{condofS} and Subsection~\ref{seciii:b}, we have the following theorem.

\begin{theorem}\label{theoremformod0case}
For any $w\ge 5$, there exists an integer $N=N(w)$ such that, for any integer $n>N$ and $n\equiv 0 \mod (w-1),$ $A_3(n, 2w-2, w)=B(n)+n$. Furthermore, there exists a balanced optimal $(n, 2w-2, w)_3$ code.
\end{theorem}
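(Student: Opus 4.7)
The plan is to assemble Theorem \ref{theoremformod0case} as a case-split synthesis of the two constructions already developed: Subsection \ref{seciii:b} handles $n\equiv 0\pmod{w-1}$ together with $(w-1)\mid \ell$, and Construction \ref{6differentkinds} handles $n\equiv 0\pmod{w-1}$ together with $(w-1)\nmid \ell$. Both constructions produce a sub-code $\S$ that is engineered to fulfill Lemma \ref{condofS}, so once the case split is justified, the theorem is reduced to quoting that lemma together with Lemma \ref{upperbound}.

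First I would verify the modular bookkeeping. Plugging $n=h(w-1)$ into Equation (\ref{2l}) gives $(w-1)\mid 2\ell$ automatically. When $w$ is even, $w-1$ is odd, so $(w-1)\mid \ell$ and only case (i) can occur: write $2\ell = 2k(w-1)$ and apply Subsection \ref{seciii:b} with $(a,b,c)=(k,0,h+k)$, which one checks satisfies Equations (\ref{ab})--(\ref{dsv}). When $w$ is odd, either $(w-1)\mid \ell$ (apply the same construction) or $2\ell = (2k+1)(w-1)$ for some $k\in[0,(w-3)/2]$; in the latter case use Construction \ref{6differentkinds} with $(a,b,c)=(k,(w-1)/2,h+k+1)$, again satisfying (\ref{ab})--(\ref{dsv}). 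This exhausts every value of $n$ in the congruence class.

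Next I would verify, class by class, that each construction delivers a code $\S$ with exactly $n-a-2b$ codewords of type $1^{w-2}2^1$ and $b$ codewords of type $1^{w-4}2^2$, pairwise disjoint supports for the type-$1^{w-4}2^2$ codewords, and $R(v)\in\{R_m,R_m+w-1\}=\{1,w\}$ with exactly $c$ vertices attaining $R(v)=1$. In Subsection \ref{seciii:b} the count is transparent: the $[w-1]$-free Golomb ruler gives $R(v)=w$ on $\bbZ_{n'}$, while the disjoint residuals $\bm s_i$ give $R(v)=1$ on the auxiliary vertices. In Construction \ref{6differentkinds} the class $\Z$ enforces the disjoint-support condition, while the five classes $\mathcal Y_1,\ldots,\mathcal Y_5$ are designed so that each $\bar{b}_i$ picks up $w-2$ incidences in $\mathcal Y_3$ on top of one in $\Z$, each $b_i$ picks up one in $\mathcal Y_2$ plus $w-3$ in $\mathcal Y_3$ on top of one in $\Z$, every vertex of $\bbZ_{n'}$ picks up one appearance in $\mathcal Y_1$ plus $w-1$ in $\mathcal Y_2\cup\mathcal Y_3\cup\mathcal Y_4\cup\mathcal Y_5$, and every vertex in $C$ lies in exactly one clique; the totals $R(v)\in\{1,w\}$ with $c=h+k+1$ vertices of degree-multiplicity $1$ then follow by direct tally.

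The main obstacle, and the step deserving the closest reading, is verifying that the union of all these colored cliques is actually a packing of $K_n$ in Construction \ref{6differentkinds}. The key observation is that the Golomb ruler is chosen $[w-1]$-free, while every residual clique placed on $\bbZ_{n'}$ (namely those in $\mathcal Y_2,\mathcal Y_3,\mathcal Y_4,\mathcal Y_5$) uses only a consecutive block of length at most $w-2$ in $\bbZ_{n'}$. Any hypothetical shared edge between $\mathcal Y_1$ and $\mathcal Y\setminus\mathcal Y_1$ would force a difference in $[w-1]$ to appear among the Golomb differences, a contradiction; and $\Z$ lives entirely inside $B$ while no other class contributes more than one vertex in $B$ to any single clique, so $\Z$ is edge-disjoint from the rest. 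With packing, counts, and $R$-values confirmed, Lemma \ref{condofS} yields, for $n>N(w)$, an optimal balanced $(n,2w-2,w)_3$ code of size $B(n)+n$, matching the upper bound of Lemma \ref{upperbound} and thus completing the proof.
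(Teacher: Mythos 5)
Your proposal is correct and follows essentially the same route as the paper: a case split on whether $(w-1)\mid\ell$ (automatic for even $w$), using the Subsection \ref{seciii:b} construction in the divisible case and Construction \ref{6differentkinds} with $(a,b,c)=(k,\tfrac{w-1}{2},h+k+1)$ otherwise, then verifying the packing and $R(v)$ distribution and invoking Lemma \ref{condofS} together with Lemma \ref{upperbound}. The only slip is cosmetic: each $v\in\bbZ_{n'}$ lies in $w-1$ cliques of $\mathcal Y_1$ and exactly one clique of $\mathcal Y_2\cup\mathcal Y_3\cup\mathcal Y_4\cup\mathcal Y_5$, not the reverse as you wrote, which does not affect the total $R(v)=w$ or the argument.
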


%

\subsection{$n\equiv 1 \mod (w-1)$ and $(w-1)\nmid \ell$}\label{additionalconstructionB}

A balanced optimal code may not exist for certain parameters $w$ and $n$. In this subsection, we will show the nonexistence of balanced optimal $(n, 2w-2, w)_3$ codes when $n\equiv 1 \mod (w-1)$ and $(w-1)\nmid \ell$. In other words, if there is an $(n, 2w-2, w)_3$ code of size  $B(n)+n$, then there must be at least one edge which is not covered by any colored clique. We first state that a code reaching the theoretical upper bound cannot be balanced.

\begin{theorem} \label{upperboundnonexisting}
Suppose that $n\equiv 1 \mod (w-1)$ and $n(n-1)-n(w-1)(w-2)\equiv (2k+1)(w-1) \mod w(w-1)$ for some $k\in[0, \frac{w-3}2]$. Then an $(n, 2w-2, w)_3$ code of size  $B(n)+n$ is not balanced.
\end{theorem}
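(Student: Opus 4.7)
The plan is to derive a contradiction from the assumed existence of a balanced $(n,2w-2,w)_3$ code $\mathcal{C}$ of size $B(n)+n$, by playing a mod-$(w-1)$ residue constraint against the $2$-label bound in Corollary~\ref{transfer}(b'). For $i=0,1,\ldots,\lfloor w/2\rfloor$, let $n_i$ be the number of codewords of type $1^{w-2i}2^i$, and for each $v\in[n]$ let $x_v^{(i)}$ be the number of such codewords containing $v$ in their support. Set $R(v)=\sum_{i\geq 1} i\,x_v^{(i)}$ and $T=\sum_{i\geq 1} i\,n_i$, the total number of positions labeled $2$; condition (b') of Corollary~\ref{transfer} gives $T\leq n$. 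Because $\mathcal{C}$ is balanced, the vertex-degree equation $(w-1)x_v^{(0)}+\sum_{i\geq 1}(w-i-1)x_v^{(i)}=n-1$ holds, and reducing modulo $w-1$ (with $n\equiv 1\pmod{w-1}$) yields $R(v)\equiv 0\pmod{w-1}$, so every $R(v)$ lies in $\{0,w-1,2(w-1),\ldots\}$.

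Next I apply the edge balance $\sum_i n_i\binom{w-i}{2}=\binom{n}{2}$. Substituting $n_0=B(n)+n-\sum_{i\geq 1}n_i$ and the defining identity $\binom{n}{2}=B(n)\binom{w}{2}+n\binom{w-1}{2}+\ell$, and using the elementary identity $\binom{w-i}{2}-\binom{i}{2}=(w-1)(w-2i)/2$, which for odd $w$ reduces mod $w-1$ to $\binom{w-i}{2}\equiv\binom{i}{2}+(w-1)/2$, I obtain
\[
\sum_{i\geq 2}\binom{i}{2}n_i\equiv\ell\equiv\frac{w-1}{2}\pmod{w-1},
\]
so we may write $\sum_{i\geq 2}\binom{i}{2}n_i=(2s+1)(w-1)/2$ for some integer $s\geq 0$; in particular this sum is strictly positive.

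The contradiction falls out of a double count of $c':=|\{v\in[n]:R(v)=0\}|$. Any position labeled $2$ sits in the support of some non-$1^w$ codeword, hence $R(v)\geq 1$, and by the divisibility condition $R(v)\geq w-1$; thus the $T$ positions labeled $2$ avoid $\{R(v)=0\}$, yielding the upper bound $c'\leq n-T$. For the lower bound, using $\sum_v x_v^{(i)}=(w-i)n_i$ I have
\[
\sum_v R(v)=\sum_{i\geq 1}i(w-i)n_i=(w-1)T-2\sum_{i\geq 2}\binom{i}{2}n_i=(w-1)(T-2s-1),
\]
while $\sum_v R(v)\geq(n-c')(w-1)$ because each nonzero $R(v)$ is at least $w-1$. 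Rearranging gives $c'\geq n-T+(2s+1)\geq n-T+1$, contradicting $c'\leq n-T$. The only mildly subtle step is the congruence $\binom{w-i}{2}\equiv\binom{i}{2}+(w-1)/2\pmod{w-1}$ for odd $w$, which is where the parity of $w$ forces $\sum_{i\geq 2}\binom{i}{2}n_i$ to be strictly positive; once that is in hand the rest is clean bookkeeping.
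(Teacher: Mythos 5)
Your proof is correct, and at its core it is the same argument as the paper's: balancedness together with $n\equiv 1\pmod{w-1}$ forces every $R(v)=\sum_{i\ge1}i\,x_v^{(i)}$ to be a multiple of $w-1$; condition $(b')$ of Corollary~\ref{transfer} bounds the number of $2$-labeled positions; and your double count $\sum_v R(v)=\sum_{i\ge1}i(w-i)n_i$ together with $c'\le n-T$ and $\sum_v R(v)\ge (n-c')(w-1)$ is exactly the paper's chain (\ref{equation3})--(\ref{equation4}), with your $n-c'$ equal to the paper's $s$ (the non-isolated vertices of $S$). The one genuine difference is how you rule out the degenerate case: the paper proves Lemma~\ref{lemmaforupperboundnonexisting} separately (a code with only types $1^w$ and $1^{w-2}2^1$ would need a non-integral number of $1^{w-2}2^1$ codewords), then uses the main inequality to force $y(i)=0$ for $i\ge2$ and contradicts that lemma; you instead reduce the edge-balance identity modulo $w-1$, via $\binom{w-i}{2}\equiv\binom{i}{2}+\frac{w-1}{2}\pmod{w-1}$ for odd $w$, to get $\sum_{i\ge2}\binom{i}{2}n_i\equiv\frac{w-1}{2}\pmod{w-1}$, hence $=(2s+1)\frac{w-1}{2}>0$. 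This single congruence both replaces the lemma and quantifies the slack, so the whole proof collapses into the one-line numerical contradiction $c'\ge n-T+2s+1>n-T\ge c'$. Both versions rest on the same arithmetic obstruction, namely $\ell=(2k+1)\frac{w-1}{2}\equiv\frac{w-1}{2}\pmod{w-1}$; your packaging is slightly more quantitative and self-contained, the paper's is slightly more modular.
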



We need the following lemma to prove Theorem~\ref{upperboundnonexisting}.
\begin{lemma}\label{lemmaforupperboundnonexisting} Under the same parameters as in Theorem~\ref{upperboundnonexisting},
if there is a balanced $(n, 2w-2, w)_3$ code $\mathcal C$ of size $B(n)+n$, then $\mathcal C$ must contain a codeword which is neither of type $1^w$ nor of type $1^{w-2}2^1.$
\end{lemma}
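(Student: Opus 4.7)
The plan is to argue by contradiction: assume $\mathcal{C}$ consists only of codewords of types $1^w$ and $1^{w-2}2^1$, and show that the balanced condition, combined with the hypothesis on $n(n-1) \bmod w(w-1)$, forces a non-integer count, which is impossible.

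Let $x$ and $y$ denote the number of codewords in $\mathcal{C}$ of types $1^w$ and $1^{w-2}2^1$, respectively, so that $x+y = B(n)+n$. Since $\mathcal{C}$ is balanced, the corresponding cliques $K_w$ and $K_{w-1}$ form a decomposition of $K_n$, so counting edges gives
$$xw(w-1) + y(w-1)(w-2) = n(n-1).$$
Because $n \equiv 1 \pmod{w-1}$ implies $(w-1) \mid n(n-1)$, dividing by $w-1$ yields $xw + y(w-2) = n(n-1)/(w-1)$.

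Next I would exploit the hypothesis $n(n-1) - n(w-1)(w-2) \equiv (2k+1)(w-1) \pmod{w(w-1)}$ to write $B(n)$ exactly (not merely as a floor):
$$B(n) = \frac{n(n-1) - n(w-1)(w-2) - (2k+1)(w-1)}{w(w-1)},$$
from which $wB(n) = \frac{n(n-1)}{w-1} - n(w-2) - (2k+1)$. Substituting $x = B(n)+n-y$ into the simplified edge equation and cancelling the $n(n-1)/(w-1)$ terms yields the clean relation $2y = 2n - (2k+1)$. Since $2k+1$ is odd, no integer $y$ satisfies this, contradicting that $y$ is a non-negative integer. Hence $\mathcal{C}$ must contain a codeword whose type is neither $1^w$ nor $1^{w-2}2^1$.

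This argument is purely a divisibility/parity obstruction and I do not anticipate a hard step; the only care needed is verifying that, in the substitution, the $n(n-1)/(w-1)$ terms cancel and leave exactly the odd residue $(2k+1)$ that triggers the parity contradiction.
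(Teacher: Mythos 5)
Your proposal is correct and follows essentially the same route as the paper: assume only types $1^w$ and $1^{w-2}2^1$, combine the size equation $x+y=B(n)+n$ with the edge-count equation from balancedness, use the hypothesis to evaluate $B(n)$ exactly, and derive $2y=2n-(2k+1)$ (equivalently the paper's $y=n-k-\tfrac12$), an odd-parity contradiction. No gaps; the bookkeeping checks out.
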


\begin{proof}
Suppose to the contrary that $\mathcal C$ consists of only codewords of type $1^w$ and  $1^{w-2}2^1$, and $x$, $y$ are their numbers, respectively. Then
\begin{equation*}
x+y=B(n)+n.
\end{equation*}
Since $\mathcal C$ is balanced,
\begin{equation*}
xe(K_w)+ye(K_{w-1})=e(K_n).
\end{equation*}
Combining both equations, we get $y=\frac{wB(n)+n(w-\frac{n-1}{w-1})}2$. But substituting $B(n)=\frac{n(n-1)-n(w-1)(w-2)-(2k+1)(w-1)}{w(w-1)}$ into this expression,  we have $y=n-k-\frac 12$, which is not even an integer.
\end{proof}

\begin{proof}[Proof of Theorem~\ref{upperboundnonexisting}]Suppose that $\mathcal C$ is a balanced $(n, 2w-2, w)_3$ code of size $B(n)+n$. Let $y(i)$, $i\ge 0$ be the number of codewords in $\mathcal C$ with type $1^{w-2i}2^i$.
By Lemma \ref{lemmaforupperboundnonexisting},
there exists some $i\in[2, \lfloor\frac w2\rfloor]$ such that $y(i)\ne 0$. Consider the complete graph $K_n$ with vertex set $V$. Let $S$ be the union of all cliques from codewords of type $1^{w-2i}2^i$ with $i>0$, and let $s$ be the number of non-isolated vertices in $S$. Since a  codeword of type $1^{w-2i}2^i$ labels $i$ vertices by $2$,  the condition $(b')$  in Corollary \ref{transfer} gives us the following inequality.
\begin{equation}\label{equation3}
\sum_{i>0}i y(i)\le s.
\end{equation}

For any vertex $v\in V$, let $y_v(i)$ be the number of codewords of type $1^{w-2i}2^i$ whose support contains $v$. Since $\mathcal C$ is  balanced,  $G=K_n-S$ is $(w-1)$-divisible, and so is $S$. Thus for any $v\in V,$
$$d_{S}(v)=\sum_{i>0}(w-i-1)y_v(i)\equiv 0\mod (w-1).$$
That is, for any  vertex $v\in V$, there exists some integer $k_v\geq 0$ such that
$$\sum_{i>0}iy_v(i)=k_v(w-1).$$
The integer $k_v\geq 1$ if and only if $v$ is not isolated in $S$.
Sum both sides above over all vertices $v\in V$ to get
\begin{equation}\label{equation41}
\sum_{v\in V}\sum_{i>0}iy_v(i)=\sum_{v\in V}k_v(w-1)\ge s(w-1).
\end{equation}
On the other hand,
\begin{equation}\label{equation4}
\sum_{v\in V}\sum_{i>0}iy_v(i)=\sum_{i>0}i\sum_{v\in V}y_v(i)=\sum_{i>0}iy(i)(w-i).
\end{equation}
Combine (\ref{equation3}), (\ref{equation41}) and (\ref{equation4}),
$$\sum_{i>0}iy(i)(w-i)\ge s(w-1)\ge \sum_{i>0}iy(i)(w-1),$$
which holds if and only if  $y(i)=0$ for all $i>1$. This contradicts to Lemma \ref{lemmaforupperboundnonexisting}.
\end{proof}

Since a balanced $(n, 2w-2, w)_3$ code of size $B(n)+n$ does not exist, we next construct an optimal code that is not balanced.
 Let $n=h(w-1)+1$ and $2\ell=(2k+1)(w-1)$ for some $k\in[0, \frac{w-3}2]$. We only allow codewords of type $1^{w}$ and  $1^{w-2}2^1$. By doing operation (O1) $k$ times and consuming $k(w-1)$ more edges, there are finally $\frac{w-1}2$ edges left uncovered. In this case, $x=B(n)+k,$ $y=n-k$ and $z=0.$


We denote all vertices of $K_n$ as the union of three parts: one single vertex $\infty$, $k$ vertices $c_j, j\in[k]$, and  $\bbZ_{n-k-1}.$ Define a sequence  $(r_1, r_2, \ldots, r_{w-1})$  of $w-1$ different vertices by
$$
r_i=\left\{
    \begin{array}{ll}
    i-1, & i\le w-k-2;\\
    \infty, & i=w-k-1;\\
    c_{w-i}, & i\ge w-k.
    \end{array}
\right.
$$

Let $m=2w^3+1$, then by Lemma \ref{golomb} there exists a $[w-1]$-free $(m, w-1)$ modular Golomb ruler $A=\{0=a_1, \ldots, a_{w-1}\}$ on $\bbZ_m$. Assume that $a_1<a_2<\cdots<a_{w-1}<m$. When $h$ is large enough such that $h>5m$, we can embed $\bbZ_m$ into $\bbZ_{n-k-1}$ in a natural way so that $A$ is also a $[w-1]$-free  $(n-k-1, w-1)$ modular Golomb ruler in $\bbZ_{n-k-1}$. The following calculations are all in $\bbZ_{n-k-1}$.

Let
$\bm a_i=\{(a_1+i)_2,(a_2+i)_1, (a_3+i)_1, \ldots, (a_{w-1}+i)_1\} \text{ for all } i\in\bbZ_{n-k-1}.$
Let $\mathcal S'=\{\bm a_i: i\in\bbZ_{n-k-1}\}$. Replace the codeword $\bm a_{2m}\in \mathcal S'$ by two other special ones of type $1^{w-2}2^1$,
$$\bm s_1=\{(a_1+2m)_2,(a_2+2m)_1, \ldots, (a_{\frac{w-1}2}+2m)_1\}\cup\{(r_1)_1, (r_2)_1, \ldots, (r_{\frac{w-1} 2})_1\},$$
$$\bm s_2=\{(r_{\frac{w+1} 2})_1, \ldots, (r_{w-2})_1,(r_{w-1})_2\} \cup \{(a_{\frac{w+1}2}+2m)_1, \ldots,  (a_{w-1}+2m)_1\}.$$

%

Let $\mathcal S=(\mathcal S'\backslash \{\bm a_{2m}\}) \cup \{\bm s_1, \bm s_2\},$ which is a set of $n-k$ cliques  of type $1^{w-2}2^1$.  Define a set of edges $E=\{e_i: i\in [\frac{w-1}2]\}$ where $e_i=r_ir_{w-i}$. It is easy to see that $e_i$ is not contained in any clique of $\mathcal S$. Then let $S=\left(\cup_{\bm a\in \mathcal S}K_{\bm a}\right)\cup E$ and $G_n=K_n-S$. We will show that $\mathcal S$ forms a packing of $K_n$, and $G_n$ has a $K_w$-decomposition when $n$ is large enough.

\begin{lemma}
The set $\mathcal S$ above forms a packing of $K_n$.
\end{lemma}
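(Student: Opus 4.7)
The plan is to show that no edge of $K_n$ is covered by two distinct cliques of $\mathcal{S}$. The standard modular Golomb ruler argument already gives that $\mathcal{S}'$ is a packing of $K_n$: if $K_{\bm{a}_i}$ and $K_{\bm{a}_{i'}}$ with $i \neq i'$ shared an edge $uv$, then writing $u-v = a_{j_1}-a_{j_2} = a_{j_3}-a_{j_4}$ in $\mathbb{Z}_{n-k-1}$ would produce two representations of the same nonzero element as a difference of the ruler, contradicting the Golomb property. Hence $\mathcal{S}' \setminus \{\bm{a}_{2m}\}$ is automatically a packing, so the only conflicts I need to rule out involve $\bm{s}_1$ and $\bm{s}_2$.

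First I would verify that $\mathrm{supp}(\bm{s}_1) \cap \mathrm{supp}(\bm{s}_2) = \emptyset$, which makes $K_{\bm{s}_1}$ and $K_{\bm{s}_2}$ trivially edge-disjoint. The ``$a$-parts'' $\{a_j + 2m : j \in [1,(w-1)/2]\}$ and $\{a_j + 2m : j \in [(w+1)/2, w-1]\}$ are disjoint because the $a_j$'s are distinct; the ``$r$-parts'' $\{r_1, \ldots, r_{(w-1)/2}\}$ and $\{r_{(w+1)/2}, \ldots, r_{w-1}\}$ are disjoint because the $r_i$'s are distinct; and the $a$-parts sit in $[2m, 3m) \subset \mathbb{Z}_{n-k-1}$ while the $r$-parts sit in $\{0, \ldots, w-k-3\} \cup \{\infty\} \cup \{c_1, \ldots, c_k\}$, so the two categories do not meet once $n > 5m$.

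The main step is to prove, for every $i \neq 2m$ and each $\bm{s} \in \{\bm{s}_1, \bm{s}_2\}$, the inequality $|\mathrm{supp}(\bm{s}) \cap \mathrm{supp}(\bm{a}_i)| \leq 1$. Suppose two distinct vertices $u, v$ lie in the intersection. Since $\mathrm{supp}(\bm{a}_i) \subset \mathbb{Z}_{n-k-1}$, both $u, v$ belong to the integer part of $\mathrm{supp}(\bm{s})$, which decomposes as an $a$-part $A \subseteq \{a_\ell + 2m : \ell \in [w-1]\}$ and an $r$-part $R \subseteq \{0, 1, \ldots, w-k-3\}$. I would then split into three cases: (a) $u, v \in A$, where the Golomb-ruler uniqueness-of-differences argument forces $i = 2m$, a contradiction; (b) $u, v \in R$, where $|u-v|$ as an integer lies in $[1, w-3]$, so $u-v \pmod{n-k-1}$ is a nonzero ruler difference falling into $[1,w-1] \cup [n-k-w, n-k-2]$, contradicting $[w-1]$-freeness (using that the difference set is closed under negation); (c) the mixed case, where $|u-v|$ as an integer lies in roughly $[2m - (w-k-3),\, 3m-1]$, but every nonzero ruler difference in $\mathbb{Z}_{n-k-1}$ lies in $[1, m-1] \cup [n-k-m, n-k-2]$, and these ranges are disjoint once $n > 5m$.

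The main obstacle I expect is the mixed case (c): one has to exploit the largeness condition $h > 5m$ built into the construction to guarantee that no wrap-around in $\mathbb{Z}_{n-k-1}$ makes a ``long'' integer gap between $A$ and $R$ coincide with a legitimate Golomb-ruler difference. The other two cases follow cleanly from the uniqueness of differences and the $[w-1]$-free property already exploited elsewhere in the paper.
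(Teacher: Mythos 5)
Your proposal is correct and follows essentially the same route as the paper's proof: reduce to conflicts involving $\bm s_1,\bm s_2$ via the packing property of $\mathcal S'$, rule out $\{\bm s_1,\bm s_2\}$ by disjointness of the $2(w-1)$ vertices, and eliminate mixed conflicts using the $[w-1]$-freeness of the ruler together with the separation $h>5m$. Your three-case analysis (both vertices in the $a$-part, both in the $r$-part, mixed) simply spells out in detail the step the paper states tersely, and all three cases check out.
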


\begin{proof}
Suppose not, then there are two different codewords $\bm a,\bm b\in\mathcal S$, such that $K_{\bm a}$ and $K_{\bm b}$ share at least one edge.
 Since $\mathcal S'$ is a packing, $\bm a$ and $\bm b$ cannot both belong to $\mathcal S'$.  Furthermore, since $h>5m$ and $2m\le a_i+2m\le 3m< n-k-1$ for each $i\in [w-1],$ the $2(w-1)$ vertices $r_i$, $i\in[w-1]$ and $a_j+2m$, $j\in [w-1]$ are all distinct, that is to say $\{\bm a, \bm b\}\neq \{\bm s_1, \bm s_2\}$.

Suppose that $\bm a\in \mathcal S'\backslash\{\bm a_{2m}\}$ and $\bm b\in \{ \bm s_1, \bm s_2\}$. Since  $supp(\bm a)$ is a $[w-1]$-free $(n-k-1, w-1)$ modular Golomb ruler and each $a_i$, $i\in[w-1]$ is less than $m$ while $h>5m$. The cliques $\bm s_i$, $i=1,2$ never share any edge with $\bm a_j$ for $j\neq 2m$. So $\mathcal S$ forms a packing of $K_n.$
\end{proof}

\begin{lemma}
  There exists an integer $N=N(w)$ such that when $h>N(w)$, $G_n$ has a $K_w$-decomposition.
\end{lemma}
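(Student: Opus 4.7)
My plan is to apply Remark~\ref{alonremark} with $H = K_w$ to the graph $G_n = K_n - S$. For sufficiently large $h$, this requires verifying two things: (i) every vertex $v$ of $K_n$ satisfies $d_{G_n}(v) \geq n - 1 - 2w^2$ and $(w-1) \mid d_{G_n}(v)$, and (ii) $e(K_w) \mid e(G_n)$.

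The first step is a case analysis of $d_S(v)$ according to the role of $v$ in the construction. I expect four classes: (a) vertices in $\bbZ_{n-k-1}$ that are neither of the form $r_i$ nor of the form $a_s + 2m$, which each lie in $w-1$ cliques of $\mathcal S' \setminus \{\bm a_{2m}\}$ and in none of $\bm s_1, \bm s_2, E$, yielding $d_S(v) = (w-1)(w-2)$; (b) the $w-1$ vertices $a_s + 2m$, which lose one clique to the removal of $\bm a_{2m}$ but are picked up by exactly one of $\bm s_1, \bm s_2$, again giving $d_S(v) = (w-2)^2 + (w-2) = (w-1)(w-2)$; (c) the $w-k-2$ vertices $r_i = i-1 \in \bbZ_{n-k-1}$, which remain in $w-1$ Golomb cliques, appear in one of $\bm s_1, \bm s_2$, and are incident to exactly one edge of $E$, giving $d_S(v) = (w-1)(w-2) + (w-2) + 1 = (w-1)^2$; and (d) the $k+1$ vertices $\{\infty\} \cup \{c_j\}_{j \in [k]}$, each in one of $\bm s_1, \bm s_2$ and one edge of $E$, giving $d_S(v) = w-1$. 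All four values are multiples of $w-1$ and bounded by $(w-1)^2 < 2w^2$, and since $n - 1 = h(w-1)$, condition (i) follows immediately.

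For condition (ii), I plan to use the handshaking lemma with the vertex counts above: $n - w + 1$ vertices of class (a)+(b), $w-k-2$ of class (c), and $1+k$ of class (d). A short algebraic simplification yields $2e(S) = (w-1)\bigl[(w-2)(n-k) + 1\bigr]$, hence $2e(G_n) = n(n-1) - (w-1)(w-2)(n-k) - (w-1)$. Reducing modulo $w(w-1)$ and substituting the hypothesis $n(n-1) - n(w-1)(w-2) \equiv (2k+1)(w-1) \pmod{w(w-1)}$, the expression collapses to $(w-1) \cdot kw \equiv 0 \pmod{w(w-1)}$, so $e(K_w) \mid e(G_n)$. In particular $e(G_n) \equiv 0 \pmod{e(K_w)}$, so the exception in Remark~\ref{alonremark} does not arise, and a full $K_w$-decomposition of $G_n$ is produced.

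The main obstacle I anticipate is not the final arithmetic but the bookkeeping in Step~1: one must verify that each $r_i = i-1 \in \bbZ_{n-k-1}$ is truly absent from the removed clique $\bm a_{2m}$ (invoking $h > 5m$ and $a_s < m$, so the shift $2m$ cannot land a small $r_i$ inside $\bm a_{2m}$), and that the new edges contributed by $\bm s_1, \bm s_2, E$ at a vertex $r_i$ do not silently duplicate edges already counted in $\mathcal S' \setminus \{\bm a_{2m}\}$. The latter is handled by the $[w-1]$-freeness of the modular Golomb ruler (already used in the packing lemma above). Once this bookkeeping is clean, everything else follows mechanically, and Remark~\ref{alonremark} delivers the required $N(w)$.
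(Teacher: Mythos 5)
Your proposal is correct and follows essentially the same route as the paper: verify the hypotheses of Remark~\ref{alonremark} by a vertex-by-vertex computation of $d_S(v)$ (your four classes collapse to the paper's three, since the vertices $a_s+2m$ get the same degree $(w-1)(w-2)$ as generic vertices of $\bbZ_{n-k-1}$) and then check $e(K_w)\mid e(G_n)$, which the paper does by computing $e(G_n)/e(K_w)=B(n)+k$ directly while you obtain the same divisibility via handshaking and reduction modulo $w(w-1)$. The bookkeeping points you flag (small $r_i$ avoiding $\bm a_{2m}$ because $h>5m$, and edges of $E$ not lying in any clique thanks to $[w-1]$-freeness) are exactly the facts the paper uses, so the argument is sound.
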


\begin{proof}
If all conditions given in Remark~\ref{alonremark} hold, the proof is complete.
  For any vertex $v$, if $v=r_i$ for $i\le w-k-2$, it is contained in $w$ cliques of $\mathcal S$ and one edge in $E$. So $d_{G_n}(v)=n-1-w(w-2)-1 \equiv 0\mod (w-1).$ If $v= r_i$ for $i>w-k-2$, $v$ is in one clique of $\mathcal S$ and one edge in $E$. So $d_{G_n}(v)=n-1-(w-2)-1\equiv 0\mod (w-1).$ If $v\in V(K_n)\backslash\{r_i\}_{i\in[w-1]}$, it is contained in $w-1$ cliques in $\mathcal S$, and $d_{G_n}(v)=n-1-(w-1)(w-2)\equiv 0 \mod (w-1).$ Furthermore, for each vertex $v\in [n]$, $d_{G_n}(v)\ge n-1-2w^2.$

We check that $\frac{e(G_n)}{e(K_w)}$ is an integer.
$$\begin{aligned}\frac{e(G_n)}{e(K_w)} &=\frac{n(n-1)-(n-k)(w-1)(w-2)-(w-1)}{w(w-1)}\\
 &=B(n)+\frac{(2k+1)(w-1)+k(w-1)(w-2)-(w-1)}{w(w-1)}\\
 &= B(n)+k.
\end{aligned}$$

By Remark \ref{alonremark}, there exists an integer $N=N(w)$ such that when $h>N(w)$, the $K_w$-decomposition $\P$ of $G$ exists.
\end{proof}

%

%
%

We color all cliques in $\P$ as type $1^w$ and get a code $\mathcal P$ of $B(n)+k$ codewords. By Corollary \ref{transfer}, $\mathcal C=\mathcal S\cup\mathcal P$ is an optimal $(n, 2w-2, w)_3$ code with size $B(n) +n$ but not balanced.
Combining the result above and Subsection \ref{seciii:a}, we have proven the following theorem.

\begin{theorem}\label{theoremformod1case}
For any integer $w\ge 5$, there exists an integer $N=N(w)$ such that for any $n>N$ and $n\equiv 1 \mod (w-1),$ $A_3(n, 2w-2, w)=B(n)+n$.
\end{theorem}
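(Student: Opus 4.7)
The plan is to split the proof of Theorem~\ref{theoremformod1case} into two cases based on the divisibility of $\ell$ (defined in Equation~(\ref{1l})) by $w-1$. When $n \equiv 1 \pmod{w-1}$, write $n = h(w-1)+1$ and substitute into $2\ell \equiv n(n-1) - n(w-1)(w-2) \pmod{w(w-1)}$; one checks directly that $(w-1) \mid 2\ell$ always holds. Hence either $(w-1) \mid \ell$, or else $w-1$ is even (forcing $w$ odd) and $2\ell \equiv (2k+1)(w-1) \pmod{w(w-1)}$ for some $k \in [0, (w-3)/2]$. These two subcases are exhaustive.

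In the first subcase, the construction from Subsection~\ref{seciii:a} already yields an $\S$ satisfying the hypotheses of Lemma~\ref{condofS}: a modular Golomb ruler on $\bbZ_{n-k}$ gives $n-k$ codewords of type $1^{w-2}2^1$ with $R(v) = w-1$ on $\bbZ_{n-k}$ and $R(v) = 0$ on the $c = k$ extra vertices $b_i$, which is precisely the parameter setup $(a, b, c) = (k, 0, k)$ arising from doing operation (O1) $k$ times. Lemma~\ref{condofS} then delivers a balanced optimal $(n, 2w-2, w)_3$ code of size $B(n) + n$ for all sufficiently large $n$.

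In the second subcase, the construction developed in Subsection~\ref{additionalconstructionB} applies: start with $n-k-1$ codewords $\bm a_i$ coming from a $[w-1]$-free modular Golomb ruler $\{0 = a_1, a_2, \dots, a_{w-1}\}$ on $\bbZ_{n-k-1}$ (via Lemma~\ref{golomb}), replace $\bm a_{2m}$ by the specially designed pair $\bm s_1, \bm s_2$ so that exactly the $(w-1)/2$ edges $e_i = r_i r_{w-i}$ remain uncovered, and set $S = (\cup_{\bm a \in \S} K_{\bm a}) \cup E$. The two lemmas verified just above then show that $\S$ is a packing of $K_n$ and that $G_n = K_n - S$ satisfies the hypotheses of Remark~\ref{alonremark}. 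Theorem~\ref{alon} provides a $K_w$-decomposition $\P$ of $G_n$ of size $B(n)+k$, and $\mathcal{C} = \S \cup \P$ is the desired $(n, 2w-2, w)_3$ code of size $B(n) + n$, which matches the upper bound of Lemma~\ref{upperbound}. Note that this code is necessarily non-balanced, as guaranteed by Theorem~\ref{upperboundnonexisting}.

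The main obstacle lies in the second subcase, specifically in verifying that the modification $\bm a_{2m} \mapsto \{\bm s_1, \bm s_2\}$ preserves the packing property and leaves $G_n$ both $(w-1)$-divisible and with edge count divisible by $\binom{w}{2}$. Both points hinge on the careful placement of the shifted ruler elements: the inequality $a_i + 2m \leq 3m < n-k-1$ (which follows from $h > 5m$) ensures the $2(w-1)$ vertices $\{r_i\} \cup \{a_j + 2m\}$ are distinct, and the counting of degrees around each $r_i$ together with the removal of exactly the $(w-1)/2$ edges of $E$ restores divisibility by $w-1$ at every vertex. Once these structural checks are in hand, the rest is a routine invocation of the machinery already assembled.
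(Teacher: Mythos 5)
Your proposal is correct and follows essentially the same route as the paper: split on whether $(w-1)\mid\ell$, handle the divisible case with the Golomb-ruler construction of Subsection~\ref{seciii:a} via Lemma~\ref{condofS}, and handle the non-divisible (odd $w$) case with the Subsection~\ref{additionalconstructionB} construction that swaps $\bm a_{2m}$ for $\bm s_1,\bm s_2$, leaves the $\frac{w-1}{2}$ edges of $E$ uncovered, and invokes Remark~\ref{alonremark} to obtain the $K_w$-decomposition of size $B(n)+k$. The verification points you highlight (exhaustiveness of the two subcases, distinctness of the $2(w-1)$ vertices from $h>5m$, divisibility restored by removing $E$) are exactly the checks carried out in the paper.
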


However, by Theorem \ref{upperboundnonexisting}, an optimal balanced $(n, 2w-2, w)_3$ code does not exist when $n\equiv 1\mod (w-1)$, and $n(n-1)-n(w-1)(w-2)\equiv (2k+1)(w-1) \mod w(w-1)$ for some $k\in[0, \frac{w-3}2]$.

\section{Constructions for $n\equiv t \mod (w-1)$ with $t\in[2, w-2]$}\label{Constructionforgeneralt}

In this section, we show that an optimal  balanced $(n, 2w-2, w)_3$ code exists for sufficiently large $n\equiv t \mod (w-1)$ when $t\in[2, w-2]$. Our method is similar to the cases when $t=0,1$, that is, constructing a code $\S$ satisfying Lemma~\ref{condofS}, but is more complicated.
Our main result is the following theorem.


\begin{theorem}\label{theoremformod-t-case}
For any integers $w\ge 5$ and $t\in[2, w-2]$, there exists an integer $N=N(w)$ such that for any $n>N$ and $n\equiv t\mod (w-1),$ there is a balanced $(n, 2w-2, w)_3$ code with size $B(n)+n$, that is $A_3(n, 2w-2, w)=B(n)+n$.
\end{theorem}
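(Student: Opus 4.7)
The plan is to reduce Theorem~\ref{theoremformod-t-case} to Lemma~\ref{condofS} by explicitly constructing, for each residue class $t \in [2,w-2]$, an $(n,2w-2,w)_3$ code $\S$ of codewords of types $1^{w-2}2^1$ and $1^{w-4}2^2$ that realises the prescribed degree distribution and satisfies the disjoint-support requirement on the $1^{w-4}2^2$ part. The first step is to pin down the parameters $(a,b,c)$. Since $n \equiv t \bmod (w-1)$ with $t \geq 2$, we have $R_m = w-t$ by (\ref{rm}) and $R(v)\in\{w-t,2w-1-t\}$. Writing $\ell$ via (\ref{2l}) and applying Equations (\ref{ab}) and (\ref{dsv}), I would pick $b$ as the smallest nonnegative integer satisfying the congruence $2b\equiv \ell - (w-1)\lfloor\ell/(w-1)\rfloor \bmod (w-1)$ for which the resulting $a=(\ell-b)/(w-1)$ and $c=(nR_m+a(w-1)+2b)/(w-1)$ are nonnegative integers. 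Since $b$ is always bounded by $w-2$, the ``auxiliary'' portion of $\S$ has size $O(w)$ while the bulk scales with $n$.

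Next I would construct $\S$ by adapting Construction~\ref{6differentkinds}. Partition the vertex set of $K_n$ into three blocks $\bbZ_{n'}\sqcup B\sqcup C$, where $|B|=2b$ hosts the supports of the $b$ cliques of type $1^{w-4}2^2$ (grouped into disjoint pairs for the two $2$-labels plus $w-4$ anchor vertices), $|C|=c$ hosts the ``special'' vertices that will end up with the low degree $R(v)=w-t$, and the remaining $n'$ vertices of $\bbZ_{n'}$ support the bulk construction. Choosing a $[w-1]$-free $(n',w-1)$ modular Golomb ruler via Lemma~\ref{golomb} yields $n'$ cyclic shifts forming edge-disjoint cliques of type $1^{w-2}2^1$; every vertex of $\bbZ_{n'}$ thereby receives $R(v)=w-1$ from this bulk alone. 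Since $w-1<2w-1-t$ for $t\le w-2$, each $v\in\bbZ_{n'}$ must be ``boosted'' by exactly $w-t$ additional cliques, which I would build as auxiliary codewords of type $1^{w-2}2^1$ whose supports consist of a fixed special vertex (in $B$ or $C$) labeled $2$ together with a $(w-2)$-element consecutive interval drawn from $\bbZ_{n'}$. The $[w-1]$-freeness automatically guarantees edge-disjointness between the auxiliary cliques and the Golomb cliques, exactly as in parts (2)--(6) of Construction~\ref{6differentkinds}.

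The auxiliary cliques are then arranged in successive ``layers'' that partition disjoint consecutive subintervals of $\bbZ_{n'}$; each vertex of $B$ is used $w-2$ times and each vertex of $C$ is used exactly once as a special anchor, so that the resulting $R$-values come out to $2w-1-t$ on $B\cup\bbZ_{n'}$ and $w-t$ on $C$. The $2$-label appears on every vertex of $B\cup\bbZ_{n'}\cup C\setminus\{\text{leftovers}\}$ at most once by design, because we assign $2$ only to the anchor vertex in each auxiliary clique and to one vertex per Golomb shift. Verifying condition (1) of Lemma~\ref{condofS} reduces to the identity in Equation~(\ref{dsv}), which the chosen $(a,b,c)$ satisfy by construction, and condition (2) is automatic from the disjointness of the $B$-blocks. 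Lemma~\ref{condofS} then delivers the balanced optimal $(n,2w-2,w)_3$ code.

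The main obstacle is the combinatorial bookkeeping: one must exhibit, for each residue $t$ and each admissible residue of $\ell$ modulo $w-1$, explicit layers of auxiliary codewords whose union of supports tiles the reserved intervals of $\bbZ_{n'}$, hits each vertex in $B\cup C$ the right number of times, and respects the at-most-one $2$-label rule at every vertex. The arithmetic consistency (total edge count, total vertex degree, total $2$-label count) is forced by Equations (\ref{ab}) and (\ref{dsv}), so the difficulty is purely constructive. Because $b$ and $c-a$ remain bounded uniformly in $n$, the number of layers needed is $O(w)$, and the sequence of consecutive $(w-2)$- and $(w-3)$-blocks drawn from $\bbZ_{n'}$ fits disjointly inside $\bbZ_{n'}$ once $n$ is sufficiently large, which is why the statement asks only for $n>N(w)$.
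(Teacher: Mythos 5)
Your overall framework (compute $(a,b,c)$ from Equations~(\ref{ab})--(\ref{dsv}), build $\S$ from a Golomb-ruler bulk plus auxiliary cliques, then invoke Lemma~\ref{condofS}) matches the paper's, but the concrete construction you propose is arithmetically impossible, and it misses the one genuinely new idea that the case $t\in[2,w-2]$ requires. By~(\ref{dsv}), the number of vertices that must have the low value $R(v)=R_m=w-t$ is $c=\frac{n(w-t)+2b}{w-1}+a$, which is $\Theta(n)$, not bounded; your closing claim that ``$b$ and $c-a$ remain bounded uniformly in $n$'' is false, and with it the claim that $O(w)$ layers of auxiliary codewords suffice. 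Your scheme puts all $c$ low vertices into $C$, gives every vertex of $\bbZ_{n'}$ exactly $w-1$ cliques from the cyclic Golomb shifts, and boosts it by auxiliary cliques consisting of one anchor in $B\cup C$ plus a $(w-2)$-interval of $\bbZ_{n'}$. Count incidences: boosting every $v\in\bbZ_{n'}$ from $w-1$ to $2w-1-t$ requires $n'(w-t)/(w-2)$ auxiliary cliques, while the total number of type-$1^{w-2}2^1$ codewords is $y=n-k-2r$, so the number of auxiliary cliques is $y-n'\approx n(w-t)/(w-1)$ once $n'\approx n(t-1)/(w-1)$; these agree only when $t=w-1$, which is excluded. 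Equally, each auxiliary clique supplies a single incidence to $B\cup C$, yet each of the $\Theta(n)$ vertices of $C$ needs $R(v)=w-t\ge 2$ incidences (your own text says each $C$-vertex is ``used exactly once as a special anchor,'' which gives $R(v)=1\neq w-t$), and you carried over the multiplicity $w-2$ for $B$-vertices from Construction~\ref{6differentkinds}, which targets $R(v)=w$ rather than $2w-t-1$. So the degree distribution demanded by Lemma~\ref{condofS}(1) cannot be realized by this structure; the difficulty is not ``purely constructive bookkeeping'' forced to work by (\ref{ab}) and (\ref{dsv}) -- the counts themselves refuse.

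The paper resolves exactly this obstruction differently: it keeps $B\cup C$ of bounded size $O(w^3)$ and produces the linear-sized set of low-$R$ vertices \emph{inside} $\bbZ_{n'}$ by Construction~\ref{doubleGolomb}, a two-family construction built from an $(\tilde h,w-1)$ modular Golomb ruler blown up modulo $w-1$: the cliques $\bm a_{i,j}$ live only in the residue classes $0,\dots,t-2$ of $\bbZ_{n'}$ modulo $w-1$ (contributing $w-1$ to those vertices), while the cliques $\bm b_{i,j}$ meet every residue class exactly $w-t$ times, so vertices in classes $t-1,\dots,w-2$ end with $R(v)=w-t$ and the rest with $2w-t-1$ (Proposition~\ref{packingdoubleGolomb}). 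A plain cyclic $[w-1]$-free ruler on $\bbZ_{n'}$, as in your proposal, is structurally incapable of this uneven distribution since it gives every vertex exactly $w-1$ cliques. The remaining, genuinely bounded-size part (the array $M$ on $B\cup C$) is then repaired not by explicit interval layers but by a greedy exchange argument swapping entries of $\bar M$ with entries of $\bar H$, justified by an $O(w^7)$ versus $\Theta(n)$ counting bound. To fix your proof you would need to either reproduce such a non-uniform bulk construction or find another mechanism giving $\Theta(n)$ vertices of degree contribution $w-t$; the single-anchor interval boosters cannot do it.
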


\subsection{Strategy}\label{scratch}

When $w$ is fixed, let $n=h(w-1)+t$ for some $t\in [2, w-2]$ be a sufficiently large integer. With $\ell$ defined in Equation~(\ref{2l}), there are unique non-negative integers $r$ and $k$ such that $2\ell= 2k(w-1)+2r$ for $0\le 2r< 2(w-1)$ and $0\le 2k(w-1)+2r<w(w-1).$ In this sense, $r$ and $k$ are functions of $n$ with values smaller than $w.$

We apply operations (O1) $k$ times and (O2) $r$ times to digest the exceeding $\ell$ edges, that is $a=k$ and $b=r$. By Equation~(\ref{rm}), $R_m=w-t$ and $R(v)\in \{w-t,2w-t-1\}$. Then by Equation~(\ref{dsv}), $c=h(w-t)+k+t+\frac{2r-(t-1)t}{w-1}$, which is a positive integer by Equation (\ref{2l}). So the desired numbers of codewords of types $1^{w}$, $1^{w-2}2^1$ and $1^{w-4}2^2$ are $x=B(n)+k+r,$ $y=n-k-2r$ and $z=r$, respectively.

By Lemma~\ref{condofS}, it suffices to construct a code $\mathcal S$ consisting of $n-k-2r$ codewords of type $1^{w-2}2^1$  and $r$ codewords of type $1^{w-4}2^2$ satisfying all conditions in Lemma~\ref{condofS}. The vertex set of $K_n$ is defined as a union of three parts with the prescribed distribution of different values of $R(v)$ in each part as follows.
\begin{itemize}
\item[(R1)] Let $\tilde{h}=h-w(w+2)$ and $n'=\tilde{h}(w-1)$, then the first part of $V(K_n)$ is $\bbZ_{n'}$. There will be $\tilde{h}(w-t)$ vertices in $\bbZ_{n'}$ with $R(v)=w-t$, and all others with $R(v)=2w-t-1$.
\item[(R2)] Let $B$ be the second part consisting $r(w-2)$ vertices, which are $\bar{b}_i$, $i\in [2r]$ and $b_j$, $j\in [r(w-4)]$. All vertices in $B$ have $R(v)=2w-t-1$.
\item[(R3)] Let $C$ be the third part consisting all remaining vertices, which are $\bar{c}_i$, $i\in[c-\tilde{h}(w-t)]$ with $R(v)=w-t$, and $c_j$, $j\in [\kappa]$ with $R(v)=2w-t-1$. Here $\kappa=n-c-\tilde{h}(t-1)-r(w-2)$. Note that $\kappa>k$, and there will be $k$ vertices $c_j$ that are not colored by $2$ in the desired $\S$.
\end{itemize}
Note that in this setting, the number of vertices with $R(v)=w-t$ is exactly $c$, and with $R(v)=2w-t-1$ is $n-c$. See Fig.~\ref{picture} for an illustration of the vertices and $R(v)$.
Our strategy of constructing $\S$ is as follows.
\begin{figure}[!htbp]
\centering
\includegraphics[height=45mm,width=70mm]{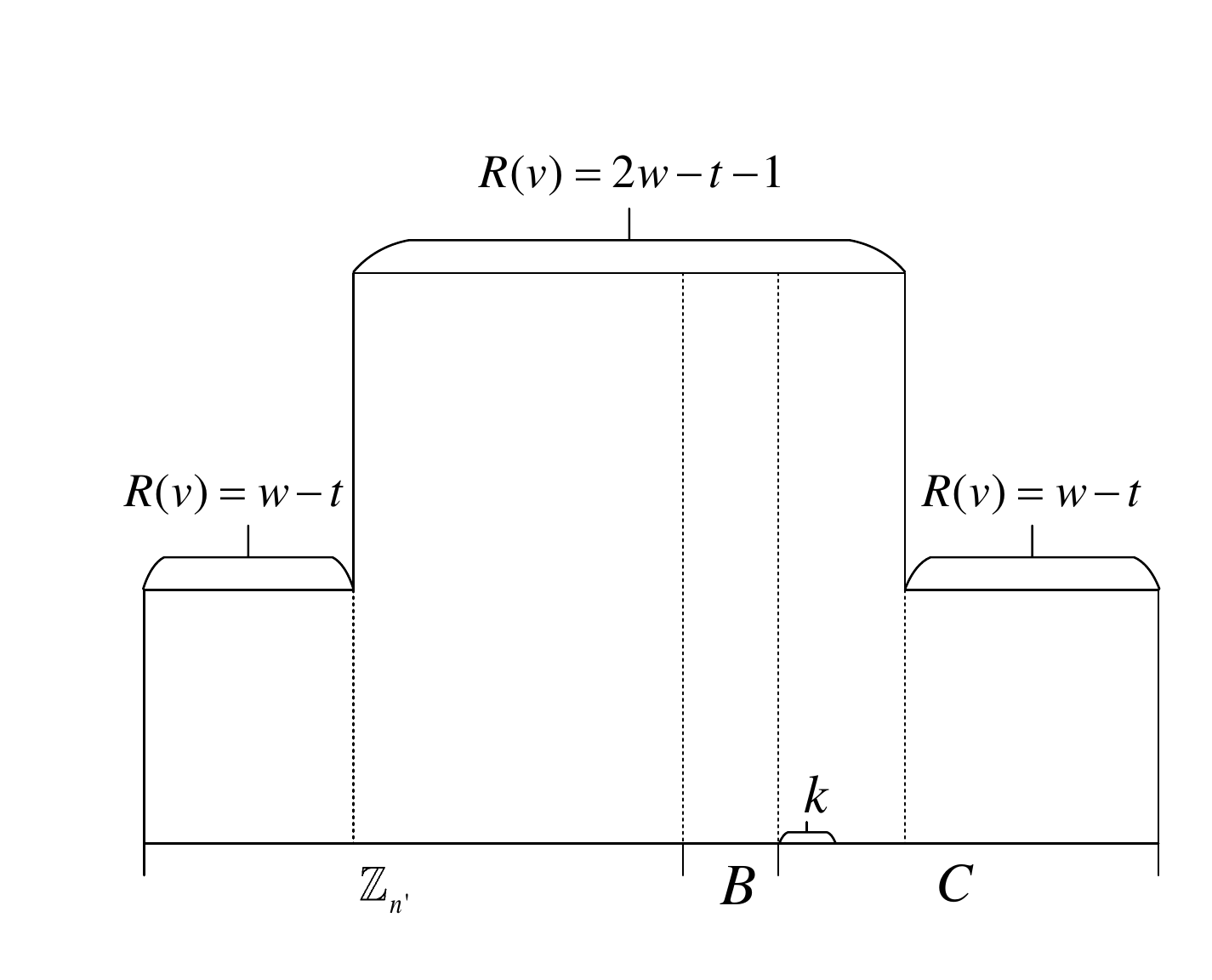}
\caption{Three parts of $V(K_n)$ with the corresponding $R(v)$}
\label{picture}
\end{figure}

First, we construct an $(n',2w-2,w)_3$ code $\H$ over  $\bbZ_{n'}$ consisting of $n'$ codewords of type $1^{w-2}2^1$, such that the requirement of $R(v)$ in (R1) is satisfied. See  Construction~\ref{doubleGolomb} and Proposition~\ref{packingdoubleGolomb} for details of $\H$. View $\H$ as a code of length $n$ over the vertex set of $K_n$. Note that $\H$ colors each vertex in $\bbZ_{n'}$  by $2$ exactly once.

Second, we construct  $r$  codewords of type $1^{w-4}2^2$
    $$\bm z_i=\{(\bar{b}_{2i-1})_2, (\bar{b}_{2i})_2\}\cup\{(b_{(w-4)(i-1)+1})_1, (b_{(w-4)(i-1)+2})_1, \ldots, (b_{(w-4)i})_1\},\text{ for all } i\in\left[r\right],$$ whose supports form a partition of the set $B$.
    Let $\mathcal Z=\{\bm z_i: i\in[r]\}.$ Each $\bar{b}_i, i\in [2r]$ is colored by $2$ exactly once.

Third, construct an $m\times (w-1)$ array $M$ with entries in $B\cup C$, where $m=r(w-4)+|C|-k=n-n'-2r-k$, such that
    \begin{itemize}
    \item the first column of $M$ covers each vertex $b_j$, $j\in[r(w-4)]$, and each vertex in $C$ except some $k$ vertices $c_j$ exactly once.
    \item the number of occurrence of each $v$ in $M$ is $R(v)-2$ if $v\in B$, and is $R(v)$ if $v\in C$, where  $R(v)$ is defined in (R2) and (R3).
    \end{itemize}
Such an array $M$ exists by simply checking the equality $m(w-1)=\sum_{v\in B}(R(v)-2)+\sum_{v\in C}R(v)$. Each row of $M$ will serve as a candidate of a codeword of type $1^{w-2}2^1$ by coloring the first element by $2$ and all others by $1$. However, the current $M$ is not valid since each row may have repeated elements. Let $\M$ denote the collection of colored rows of the current $M$. We would like to do some operations of  $\M\sqcup \H$, so that finally, $\M\sqcup \H\sqcup \mathcal Z$ will give the desired $\S$. The symbol $\sqcup$ means the disjoint union of collections.

Before presenting our algorithms of the operations of  $\M\sqcup \H$ in Section\ref{constructingmathcalS}, we give a construction of $\H$.

\subsection{Construction of $\H$ and its Properties}

\begin{construction}\label{doubleGolomb}
Suppose that  $\tilde{h}>2w^3$. Start from an $(\tilde{h}, w-1)$ modular Golomb ruler $\{a_0=0, a_1, a_2, \ldots, a_{w-2}\}$, and view it as a subset in $\bbZ_{n'}$, where $n'=\tilde{h}(w-1)$.  The  code $\mathcal H$ consisting of $n'$ codewords of type $1^{w-2}2^1$ is constructed below.

\begin{itemize}
  \item[(1)] For each $i\in [0, t-2]$ and $j\in [0, \tilde{h}-1],$ let \[\bm a_{i, j}=\{(j(w-1)+i)_1, ((a_1+j)(w-1)+i)_1, \ldots, ((a_{w-3}+j)(w-1)+i)_1, ((a_{w-2}+j)(w-1)+i)_2\}.\]

  \item[(2)] For each $i\in[0, w-t-1]$ and $j\in [0, \tilde{h}-1]$, let \[supp(\bm b_{i, j})=\{(i+j)(w-1), (2i+j)(w-1)+1, (3i+j)(w-1)+2,\ldots,  ((w-1)i+j)(w-1)+w-2\},\] where the vertex $((t+i)i+j)(w-1)+(t+i-1)$ is colored by $2$ and all other vertices are colored by $1$.

\end{itemize}

Then $\mathcal H=\{\bm a_{i, j}: i\in [0, t-2], j\in [0, \tilde{h}-1]\}\cup\{\bm b_{i, j}: i\in [0, w-t-1], j\in [0, \tilde{h}-1]\}.$
It is easy to verify that each vertex in $ \bbZ_{n'}$  is colored by 2 exactly once in $\mathcal H$.
\end{construction}

\begin{proposition}\label{packingdoubleGolomb} The code $\mathcal H$ in Construction~\ref{doubleGolomb} is an $(n', 2w-2, w)_3$ code.
Further, there are exactly $\tilde{h}(w-t)$ vertices in $\bbZ_{n'}$ with $R(v)=w-t$, and all others with $R(v)=2w-t-1$, which satisfies the requirement of $R(v)$ in (R1).
\end{proposition}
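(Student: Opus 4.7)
The plan is to verify the two conditions of Corollary~\ref{transfer}: (b') each vertex of $\bbZ_{n'}$ is colored by $2$ at most once in $\H$, and (a') the colored cliques $\{K_{\bm u}:\bm u\in\H\}$ form a packing of $K_{n'}$. Once these are in place, the claim on $R(v)$ is a direct counting argument.

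I would first organize $\bbZ_{n'}$ by residue modulo $w-1$, noting that each residue class has exactly $\tilde{h}$ members. Condition (b') follows by inspection: in family~(1) the ``$2$''-labeled vertex of $\bm a_{i,j}$ is $(a_{w-2}+j)(w-1)+i$, so as $(i,j)$ varies these hit every vertex of residue in $[0,t-2]$ exactly once; in family~(2) the ``$2$''-labeled vertex of $\bm b_{i,j}$ has residue $t+i-1$, so they cover every vertex of residue in $[t-1,w-2]$ exactly once. Together every vertex is labeled by $2$ exactly once.

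For (a') I would split into three cases, using that every vertex of $\bm a_{i,j}$ has residue $i$ while $\bm b_{i,j}$ meets each residue class in exactly one vertex. Two family-(1) codewords $\bm a_{i,j},\bm a_{i',j'}$ share no vertex when $i\ne i'$; when $i=i'$ and $j\ne j'$, a shared vertex forces $a_s-a_{s'}\equiv j'-j\pmod{\tilde{h}}$, and the $(\tilde{h},w-1)$-modular Golomb ruler property admits at most one such coincidence. A family-(1)/family-(2) pair intersects in at most one vertex because $\bm b_{i',j'}$ has only one vertex of each residue. The delicate case is two family-(2) codewords $\bm b_{i,j},\bm b_{i',j'}$ with $(i,j)\ne(i',j')$: a shared vertex at coordinate $s$ requires $(s+1)(i-i')\equiv j'-j\pmod{\tilde{h}}$, and two distinct solutions $s_1\ne s_2$ would force $\tilde{h}\mid (s_1-s_2)(i-i')$ with $i\ne i'$; but $|(s_1-s_2)(i-i')|\le (w-2)(w-t-1)<\tilde{h}$ since $\tilde{h}>2w^3$, a contradiction. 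This is the one place where the hypothesis $\tilde{h}>2w^3$ is really needed, and it is the main obstacle of the proof.

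Finally, for the $R(v)$ count: if $v=k(w-1)+i$ with $i\in[0,t-2]$, then $v\in\bm a_{i,j}$ iff $j\equiv k-a_s\pmod{\tilde{h}}$ for some $s\in[0,w-2]$, giving $w-1$ family-(1) contributions, while $v\in\bm b_{i',j'}$ iff $j'\equiv k-(i+1)i'\pmod{\tilde{h}}$, giving one $j'$ per $i'\in[0,w-t-1]$ and hence $w-t$ family-(2) contributions, so $R(v)=2w-t-1$. If $i\in[t-1,w-2]$ the family-(1) contribution vanishes and the same $w-t$ family-(2) contributions remain, so $R(v)=w-t$. Since there are exactly $w-t$ residues yielding $R(v)=w-t$, the corresponding vertex count is $\tilde{h}(w-t)$, matching (R1).
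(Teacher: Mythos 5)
Your proof is correct and follows essentially the same route as the paper's: a residue-class case analysis for the packing property, with the Golomb-ruler difference argument for two family-(1) codewords, the one-vertex-per-residue observation for mixed pairs, and the key congruence $(s_1-s_2)(i-i')\equiv 0 \pmod{\tilde h}$ bounded by $(w-2)(w-t-1)<\tilde h$ for two family-(2) codewords, followed by the same per-residue count of $R(v)$. The only difference is cosmetic: you also spell out that every vertex is colored by $2$ exactly once, which the paper states as an easy remark within Construction~\ref{doubleGolomb} rather than in the proof of the proposition.
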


\begin{proof}
For any vertex $v\in \mathbb Z_{n'}$, there are exactly $w-t$ cliques  $\bm  b_{i, j}$ containing $v$. Let $v \equiv i \mod (w-1)$ for some $i\in[0, w-2]$. If  $i$ is in $[0, t-2],$ then $v$ is contained in exactly $w-1$ cliques  $\bm a_{i, j}$, otherwise $v$ does not appear in any clique of $\bm a_{i, j}.$ This proves the distribution of $R(v)$.

It is left to prove that the collection of all supports of codewords in $\mathcal C$ forms a packing.
 Here are five cases.
\begin{itemize}
\item[(1)] By modular Golomb ruler, for any $0\le j\ne j'\le \tilde{h}-1$ and $i\in [0, t-2],$ $|supp(\bm a_{i, j})\cap supp(\bm a_{i, j'})|\le 1.$

\item[(2)] Since any vertex of $supp(\bm a_{i, j})$ modulo $(w-1)$ equals $i$, then for any $0\le i_1\ne i_2\le t-2$ with any $j$ and $j'$, $|supp(\bm a_{i_1, j})\cap supp(\bm a_{i_2, j'})|=0.$

\item[(3)] It is easy to see that $|supp(\bm b_{i, j})\cap supp(\bm b_{i, j'})|=0$ for any $0\le j\ne j' \le \tilde{h}-1.$

\item[(4)] For any $0\le i_1<i_2\le w-t-1$ and $0\le j, j'\le \tilde{h}-1$, we also have $|supp(\bm b_{i_1, j})\cap supp(\bm b_{i_2, j'})|\le 1.$ Otherwise, there exists a pair $(s_1, s_2)$ from $[w-1]$ with $s_1<s_2$ such that the following two equations hold:
$$(s_1i_1+j)(w-1)+s_1-1\equiv (s_1i_2+j')(w-1)+s_1-1 \mod n';$$
$$(s_2i_1+j)(w-1)+s_2-1\equiv (s_2i_2+j')(w-1)+s_2-1 \mod n'.$$
    We calculate the differences of these two equations on both side and get
    $(s_2-s_1)(i_2-i_1)\equiv 0 \mod \tilde{h}.$
    However, $0\le (s_2-s_1)(i_2-i_1)\le (w-2)(w-t-1)<2w^2<\tilde{h}$, a contradiction.

\item[(5)] For any $0\le i\le t-2$, $0\le i'\le w-t-1$ and $0\le j, j'\le h-1$, we have $|\bm a_{i, j}\cap \bm b_{i', j'}|\le 1.$ That is because all vertices in $supp(\bm a_{i, j})$ are the same while all vertices of $supp(\bm b_{i', j'})$ are pairwise different after modulo $(w-1).$
\end{itemize}
\end{proof}

\subsection{Constructing $\mathcal S$}\label{constructingmathcalS}

Let $\H$, $\Z$, $\M$ and $M$ be defined as before. By abuse of notation, we also use $\Z$, $\M$, and $\H$ to denote the corresponding collections of unlabeled (multi) subsets of $V(K_n)$. We express $\H$ as row vectors of an $n'\times (w-1)$ array $H$ with each row being the support of a codeword in $\H$, where the only vertex labeled by $2$ is always located in the first column. Let $\bar{{M}}$, $\bar{H}$ be obtained from $M$ and $H$ by deleting the first column.

Let $\S'$ be the disjoint union of the collections $\H$, $\Z$ and $\M$, denoted by $\S'=\H\sqcup \Z\sqcup\M$. Observe that if we can make  $\S'$ an $(n,2w-2,w)_3$ code by only exchanging entries in  $\bar{{M}}$ and $\bar{H}$, then the resultant code will satisfy all conditions of Lemma~\ref{condofS}, that is, contain $b=r$ codewords of type $1^{w-4}2^2$ whose supports are pairwise disjoint (codewords in $\Z$), contain $n-a-2b=n-k-2r$ codewords of type $1^{w-2}2^1$ (codewords in modified $\H\sqcup \M$), and has exactly $c$ vertices with $R(v)=R_m=w-t$ and all others with $R(v)=R_m+w-1$. Since each vertex is colored by $2$ in $\S'$ exactly once except $k$ vertices $c_j$, by  Corollary~\ref{transfer}, it suffices to exchange entries in  $\bar{{M}}$ and $\bar{H}$ so that there are no repeated entries in each member of $\M\sqcup\H$ and no common pairs among all members of $\H\sqcup \Z\sqcup \M$. The initial state of $\H\sqcup \Z$ is already an $(n,2w-2,w)_3$ code, so we can do this exchange one by one on all entries of $\bar{{M}}$ with a natural order.

In the algorithm to follow, all our operations are supposed to exchange some entry in $\bar{{M}}$ and some entry in $\bar{H}$ only. Let $\bm m_i$ denote the $i$th row of $M$, and let $m_{i,j}$ be the $(i,j)$th entry of $M$. Suppose that we have modified all entries $m_{i',j'}$ with $1\leq i'< i$ for any $2\leq j'\leq w-1$, and $m_{i,j'}$ for  $2\leq j'<j$, such that
\begin{itemize}
\item[(h1)] the updated $\H\sqcup\{\bm m_{i'}: i'<i \}$ has no repeated vertex in each element;
\item[(h2)] the updated entries $m_{i, 1}, m_{i, 2}, \ldots, m_{i, j-1}$ are pairwise distinct;
\item[(h3)] the updated multiset $\H\sqcup \Z \sqcup\{\bm m_{i'}: i'<i \} \sqcup \{\{m_{i,j'}: j'<j\}\}$ is a set, and further a packing over $V(K_n)$.
\end{itemize}
Now we focus on the entry $m_{i,j}$. If the following properties (p1)-(p3) are already satisfied, we do nothing but change $j\rightarrow j+1$ if $j< w-1$, or change $i\rightarrow i+1$ and $j\rightarrow 2$ if $j=w-1$.  Otherwise, we need to find an entry $\hbar$ in the current $\bar{H}$, so that exchanging $m_{i,j}$ and $\hbar$ will result in that
\begin{itemize}
\item[(p1)] the updated $\H\sqcup\{\bm m_{i'}: i'<i \}$ have no repeated vertex in each element;
\item[(p2)] the updated entries $m_{i, 1}, m_{i, 2}, \ldots, m_{i, j}$ are pairwise distinct;
\item[(p3)] the updated multiset $\H\sqcup \Z \sqcup\{\bm m_{i'}: i'<i \} \sqcup \{\{m_{i,j'}: j'\le j\}\}$ is a set, and further a packing over $V(K_n)$.
\end{itemize}
The properties (p1)-(p3) will be used as the new (h1)-(h3) in the next step of modification. We claim that the existence of $\hbar$ is always possible for fixed $w$ and sufficiently large $n$. We give the proof below.

The number $m$ of rows of $M$ is $m=t+w(w+2)(w-1)-2r-k\leq 2w^3$, so the set $T$ of distinct entries in any updated $M$ is of size at most $2w^4$. Each vertex of $T$ can appear in at most $R_m+w-1=2w-t-1$ members of $\H\sqcup \Z\sqcup\M$, so the set $N$ of vertices in $V(K_n)$ appearing in the same member of  $\H\sqcup \Z\sqcup \M$  with some vertex in $T$ is of size at most $2w^4\cdot(2w-t-1)\cdot(w-1)\leq 4w^6$. All vertices in $N$  appear in at most $4w^6\cdot(2w-t-1)\leq 8w^7$ members of $\H\sqcup \Z\sqcup \M$. Since $8w^7=O(1)$ when $n$ goes to infinity, $|\H|=\Theta(n)$, there is at least one codeword of $\H$ containing no vertex in $N$. Choose such a codeword $\bm a\in \H$ and a vertex $\hbar\in supp(\bm a)$ that is colored by $1$. It is clear that $\hbar$ is an entry of $\bar{H}$. Let $\bm a$ be modified by replacing $\hbar$ with $m_{i,j}$, and update $\H$. Replace $m_{i,j}$ by $\hbar$ and update $M$.  We show that the three properties (p1)--(p3) follow after this round of updating.

In fact,
by the disjointness between $supp(\bm a)$ and $N$, $supp(\bm a)$ contains no vertices from $M$. Thus properties (p1) and (p2), and the part of telling the multiset is a set in property (p3) are obvious. The only thing need to prove is the packing property in (p3). For convenience, let $\mathcal C$ be the collection $\H\sqcup \Z \sqcup\{\bm m_{i'}: i'<i \} \sqcup \{\{m_{i,j'}: j'<j\}\}$ before exchanging $m_{i,j}$ and $\hbar$, and let $\mathcal {\bar C}$ be  $\H\sqcup \Z \sqcup\{\bm m_{i'}: i'<i \} \sqcup \{\{m_{i,j'}: j'\le j\}\}$ after the exchange. If (p3) is false, there exist two vertices $u$ and $v$ in $V(K_n)$ such that at least two different elements, denoted as $\bm u$ and $\bm v$, in $\mathcal {\bar C}$ contain both of them. If $m_{i, j}$ and $\hbar$ are both out of $\{u, v\}$, the number of members in $\mathcal {\bar C}$ containing both $u$ and $v$ is the same as that in $\mathcal C$. By induction hypothesis $\mathcal C$ is already a packing and thus such two elements $\bm u$ and $\bm v$ do not exist. As for the case when $\{u, v\}$ contains at least one vertex of $m_{i, j}$ and $\hbar$, contradiction can be derived directly by that $supp(\bm a)$ is disjoint from $N$.

Finally, we complete the proof by showing the initial hypothesis holds. When $i=1$ and $j=2$, no entries of $M$ need to be modified before. So properties (h1)-(h3) are satisfied naturally from our constructions of $\H$ and $\Z$.  Thus we can find an $\hbar$ to exchange $m_{1,2}$ such that (p1)-(p3) are satisfied, which will be used as the new (h1)-(h3) when we modify $m_{1, 3}$. Continuing this exchanging algorithm until we modify all entries in $\bar{{M}}$, so that the modified $\H\sqcup \Z\sqcup \M$ is a packing over $V(K_n)$. Then $\S$ is the set $\H\cup \Z\cup \M$ with the prescribed color structures.


\section{Conclusion}\label{conc.}

In this paper we study ternary constant weight codes in $\ell_1$-metric for any given length $n$, weight $w$ and minimum distance $2w-2.$ We show that the upper bound on the maximum size of an $(n, 2w-2, w)_3$ code in \cite{chen2020optimal} can be achieved for all sufficiently large $n$. Our main result is stated below by combining all results in Theorems \ref{theoremformod0case},   \ref{theoremformod1case} and \ref{theoremformod-t-case}.

\begin{theorem}\label{theoremformod-all-case}
For any integer $w\ge 5$, there exists an integer $N=N(w)$ such that for any $n>N$, $A_3(n, 2w-2, w)=B(n)+n$.
Further, there exists an optimal $(n, 2w-2, w)_3$ code that is  balanced if and only if the following does not happen.
\begin{itemize}
\item The weight $w$ is odd, $n\equiv1\mod (w-1)$ and $n(n-1)-n(w-1)(w-2)\equiv (2k+1)(w-1)\mod w(w-1)$ for some integer $k$ such that $2k+1\in[0, w-1].$
\end{itemize}
\end{theorem}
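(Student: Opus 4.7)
The plan is to derive Theorem~\ref{theoremformod-all-case} as a straightforward synthesis of the three existence results already proved in earlier sections, together with the nonexistence result of Theorem~\ref{upperboundnonexisting}. Every sufficiently large $n$ satisfies $n\equiv t\pmod{w-1}$ for exactly one $t\in[0,w-2]$, and each of Theorems~\ref{theoremformod0case}, \ref{theoremformod1case}, and \ref{theoremformod-t-case} produces the value $A_3(n,2w-2,w)=B(n)+n$ in one of these residue classes. The first half of the statement will then follow by setting $N(w)$ to be the maximum of the thresholds supplied by these three theorems.

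For the ``if and only if'' characterization of when a \emph{balanced} optimal code exists, I would argue the two directions separately. The ``only if'' direction is immediate from Theorem~\ref{upperboundnonexisting}, which precisely forbids a balanced $(n,2w-2,w)_3$ code of size $B(n)+n$ under the stated congruence conditions. For the ``if'' direction, I would split by the residue $t=n\bmod (w-1)$: when $t=0$ or $t\in[2,w-2]$, Theorems~\ref{theoremformod0case} and~\ref{theoremformod-t-case} already produce a balanced optimal code with no further assumption. The only case requiring discussion is $t=1$, where the exception applies only when $w$ is odd and $(w-1)\nmid\ell$ (with $\ell$ defined by Equation~(\ref{2l})). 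Thus ``outside the exception with $t=1$'' means either $(w-1)\mid\ell$, in which case the Subsection~\ref{seciii:a} construction delivers a balanced optimal code, or $w$ is even, in which case I still need to verify that the balanced construction applies.

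The only computation that actually has to be carried out is the observation that when $w$ is even and $n\equiv 1\pmod{w-1}$, one automatically has $(w-1)\mid\ell$, so that the Subsection~\ref{seciii:a} construction (rather than the necessarily unbalanced construction of Subsection~\ref{additionalconstructionB}) applies. This reduces to a single modular computation: writing $n=h(w-1)+1$, the right-hand side of~(\ref{2l}) factors as $n(w-1)[h-(w-2)]$, so $(w-1)\mid 2\ell$ unconditionally, and when $w$ is even, $\gcd(2,w-1)=1$, forcing $(w-1)\mid\ell$. This will also confirm that the exceptional class in the theorem is stated correctly, since for odd $w$ the quotient $2\ell/(w-1)$ can take either parity, and odd parity is exactly the condition $2\ell\equiv(2k+1)(w-1)\pmod{w(w-1)}$ appearing in the statement. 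I do not expect any other obstacle, since all the heavy lifting---the modular Golomb ruler constructions, the algorithmic exchange argument of Section~\ref{constructingmathcalS}, and the appeal to Theorem~\ref{alon}---is already packaged inside the three cited existence theorems.
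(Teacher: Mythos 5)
Your proposal is correct and follows essentially the same route as the paper, which proves this theorem simply by combining Theorems \ref{theoremformod0case}, \ref{theoremformod1case}, and \ref{theoremformod-t-case} for the value of $A_3(n,2w-2,w)$ and Theorem \ref{upperboundnonexisting} for the nonexistence of a balanced optimal code in the exceptional class. Your added bookkeeping (factoring $n(n-1)-n(w-1)(w-2)=n(w-1)(h-w+2)$ when $n\equiv 1\pmod{w-1}$, so that $(w-1)\mid\ell$ automatically for even $w$, and odd parity of $2\ell/(w-1)$ matches the stated congruence for odd $w$) is exactly the verification implicit in the paper's case split between Subsections \ref{seciii:a} and \ref{additionalconstructionB}.
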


 The optimal codes that we construct only contain codewords of types $1^w$, $1^{w-2}2^1$ and $1^{w-4}2^2$.
The method we use is  transferring the problem of finding an optimal $(n, 2w-2, w)_3$ code to the problem of finding  a packing of $K_n$ of the same size by colored cliques with extra restrictions. We use $B$-free modular Golomb rulers to find a packing $\mathcal S$ with size no more than $n$ of colored cliques to produce all codewords of types $1^{w-2}2^1$ and $1^{w-4}2^2$. Our constructions of $\S$ satisfy the conditions of  Lemma~\ref{condofS}, so that $K_n\setminus (\cup_{\bm a\in \S}K_{\bm a})$ has a $K_w$-decomposition by Theorem \ref{alon} when $n$ is large enough, which produces all codewords of type $1^w$.

Our method works well on the distance $2w-2$. However, it does not work when the minimum distance $d<2w-2.$ Meanwhile, since the existence of a $K_w$-decomposition in Theorem \ref{alon} is not explicit, efficient algorithms to find out an optimal $(n, 2w-2, w-1)_3$ code in polynomial time are in need. We leave these problems for future study.

\vskip 10pt
\bibliographystyle{IEEEtran}
\bibliography{cwc}
\end{document}